% !TEX spellcheck = en_US
\documentclass[11pt]{article}

\usepackage{cite}
\usepackage{amsmath,amssymb,amsfonts}
\usepackage{amsthm}
\usepackage{algorithmic}
\usepackage{graphicx}
\usepackage{textcomp}
\usepackage{braket}
\usepackage{xcolor}
\usepackage{todonotes}
\usepackage{hyperref}
\usepackage{a4wide}
\usepackage[ruled,vlined, linesnumbered]{algorithm2e}
\usepackage{authblk}

\def\BibTeX{{\rm B\kern-.05em{\sc i\kern-.025em b}\kern-.08em
    T\kern-.1667em\lower.7ex\hbox{E}\kern-.125emX}}

\newtheorem{thm}{Theorem}
\newtheorem{cor}{Corollary}

\newtheorem{rem}{Remark}
\newtheorem{obs}{Observation}

\newtheorem{assum}{Assumption}

\newcommand{\Rset}{\mathbb{R}}
\newcommand{\Xset}{\mathbb{X}}

\DeclareMathOperator{\E}{\mathbb{E}}

\begin{document}

%*******TITLE AND AUTHORS*******************************************

 \title{A framework of distributionally robust possibilistic optimization} 

\author[1]{Romain Guillaume}
\author[2]{Adam Kasperski\footnote{Corresponding author}}
\author[2]{Pawe{\l} Zieli\'nski}

  \affil[1]{Universit{\'e} de Toulouse-IRIT Toulouse, France, \texttt{romain.guillaume@irit.fr}}
\affil[2]{
Wroc{\l}aw  University of Science and Technology, Wroc{\l}aw, Poland\\
            \texttt{\{adam.kasperski,pawel.zielinski\}@pwr.edu.pl}}

\date{}

\maketitle

\maketitle

 \begin{abstract}
 	In this paper,  an optimization problem with uncertain constraint coefficients is considered. Possibility theory is used to model uncertainty. Namely, a joint possibility distribution in constraint coefficient realizations, called scenarios, is specified. This possibility distribution induces a necessity measure in a scenario set, which in turn describes an ambiguity set of probability distributions in a scenario set. The distributionally robust approach is then used to convert the imprecise constraints into deterministic equivalents. Namely, the left-hand side of an imprecise constraint is evaluated by using a risk measure with respect to the worst probability distribution that can occur. In this paper, the Conditional Value at Risk is used as the risk measure, which
 generalizes the strict robust, and expected value approaches commonly used in literature.	
	A general framework 
for solving such a class of problems is described. 
Some cases which can be solved in polynomial time are identified.
 \end{abstract}

\noindent \textbf{Keywords}: 
robust optimization; possibility theory; fuzzy optimization; fuzzy intervals; conditional value at risk

\section{Introduction}

Modeling uncertainty in optimization problems has been a subject of extensive research in recent decades.  This is due to the fact that in most applications, the exact values of input data in optimization models are unknown before a solution must be determined. Two basic questions arise while dealing with this uncertainty. The first is about the model of uncertainty adopted. The answer depends on the knowledge available about the input data. If the true probability distribution for uncertain parameters is known, then various stochastic models can be used (see, e.g.,~\cite{KM05}). For example, the uncertain constraints can be replaced with the so-called \emph{chance constraints}, in which we require that they be satisfied with a given probability~\cite{CC59}.
On the other hand, when only the set of possible realizations of the parameters (called a \emph{scenario set}) is available,  the  \emph{robust} approach is widely utilized (see, e.g.,~\cite{BN09}). In the \emph{strict robust} approach 
%(the max approach)
the constraints must be satisfied for all possible realizations of input data (scenarios). This strong requirement can be 
softened by introducing a \emph{budget}, i.e. some limit on the amount of uncertainty~\cite{BS04}.

In practice, an intermediate case often arises. Namely, partial information about the probability distributions is known, which leads to the so-called \emph{distributionally robust} models.  The left-hand side of an uncertain constraint is then evaluated by means of some risk measure with respect to the worst probability distribution that can occur.  Typically, the expected value is chosen as the risk measure. In this case, the decision-maker is risk neutral. Information about the distribution parameters, such as the mean or covariance matrix, can be reconstructed from statistical data, resulting in a class of \emph{data-driven problems}~\cite{BGK17}. On the other hand, some information about the distributions can be derived
 from
 the knowledge of experts or past experience. In this case, a
possibilistic uncertainty model can be appropriate (see, e.g.,~\cite{IR00}). 

In this paper, we will show how the \emph{possibility theory}~\cite{DP88} can be used in the context of distributionally robust optimization. Possibility theory offers a framework for dealing with uncertainty. In many practical situations, it is possible to say which realizations of the problem parameters are more likely to occur than others. Therefore, the common uncertainty sets used in robust optimization are too inaccurate. Also, focusing on the worst parameter realizations may lead to a large deterioration of the quality of the computed solutions (the large price of robustness). On the other hand, due to the lack of statistical data or the complex nature of the world, precise probabilistic information often cannot be provided. Then, the possibility theory, whose axioms are less rigorous than the axioms of probability theory, can be used to handle uncertainty.
In this paper, we will use some relationships between possibility and probability theories, which allows us to define a class of admissible probability distributions for the problem parameters. Namely, a possibility distribution in the set of parameter realizations induces a \emph{necessity measure} in this set, which can be seen as a lower bound on the probability measure~\cite{DD06}. 
We will further take into account the risk aversion of decision-makers by using a risk measure called \emph{Conditional Value at Risk} (CVaR for short)~\cite{RU00}.  CVaR is a convex coherent risk measure that possesses some favorable computational properties.
Using it, we can generalize the strict robust, and expected value-based approaches.
 In this paper, we show a general framework for solving a problem with uncertain parameters  
 under the assumed model of uncertainty based on the above possibility-probability relationship. We also propose a method of constructing a joint possibility distribution for the problem parameters, which leads to tractable reformulations that can be solved efficiently by some standard off-the-shelf solvers. This paper is an extended version of~\cite{GKZ21a}, where one particular uncertainty model was discussed.
 
The approach considered in this paper belongs to the area of \emph{fuzzy optimization} in which many solution concepts have been proposed in the literature (see, e.g.,~\cite{LK10}). Generally, fuzzy sets can be used to model uncertainty or preferences in optimization models. In the former case (used in this paper), the membership function of a fuzzy set is interpreted as a possibility distribution for the values of some uncertain quantity~\cite{DP88}. This interpretation leads to various solution concepts (see, e.g.~\cite{LK10,LH92} for surveys). For example, fuzzy chance constraints can be used in which a constraint with fuzzy coefficients should be satisfied with the highest degree of possibility or necessity~\cite{LIU01}. Furthermore, the uncertain objective function with fuzzy coefficients can be replaced with a deterministic equivalent by using some defuzzification methods~\cite{PZT11}. Some recent applications of fuzzy robust optimization methods can be found, for example, in~\cite{LY19, PRB21}. Our approach is new in this area. It uses a link between fuzzy and stochastic models established by the possibility theory. This allows us to use well-known stochastic methods under the fuzzy (possibilistic) model of uncertainty. Our model is illustrated with two examples, namely continuous knapsack and portfolio selection problems. Its validation on real problems is an interesting area of further research.

This paper is organized as follows. In Section~\ref{sec1}, we consider an uncertain constraint and present various methods of converting such a constraint into deterministic equivalents. In Sections~\ref{sec2} and~\ref{sec3}, we recall basic notions of possibility theory. In particular, we describe some relationships between possibility and probability theories, which allow us to build a model of uncertainty in which a joint possibility distribution is specified in the set of scenarios. In Section~\ref{stfc} we discuss some
tractable cases of our model, which can be converted into linear or second-order cone programming problems. 
In Section~\ref{sec4}, we characterize the complexity of basic classes of optimization problems, such as linear programming or combinatorial ones. Finally, in Section~\ref{sec5}, we illustrate our concept with two computational examples -  the continuous knapsack and portfolio selection problems.

\section{Handling uncertain constraints}
\label{sec1}

Consider the following \emph{uncertain linear constraint}:
\begin{equation}
	\label{uc}
	\tilde{\pmb{a}}^T\pmb{x}\leq b,
\end{equation}
where $\pmb{x}\in \mathbb{X}$ is a vector of decision variables with a domain $\mathbb{X}\subseteq \mathbb{R}^n$, 
$\tilde{\pmb{a}}=(\tilde{a}_j)_{j\in [n]}$ is a vector of uncertain parameters, $\tilde{a}_j$ is 
a real-valued uncertain variable
 and $b\in \Rset$ is a prescribed bound.
 A particular realization $\pmb{a}=(a_j)_{j\in[n]}\in \Rset^n$ of $\tilde{\pmb{a}}$ is called a \emph{scenario}
 and the set of all possible scenarios, denoted by $\mathcal{U}$, is called the \emph{scenario set} or \emph{uncertainty set}.
 Intuitively, the uncertain constraint~(\ref{uc}) means that  $\pmb{a}^T\pmb{x}\leq b$ is satisfied by 
 a solution $\pmb{x}\in \mathbb{X}$  of an optimization problem for some or all scenarios~$\pmb{a}\in \mathcal{U}$,
 depending on a method of handling the uncertain constraints.

Let $\mathcal{P}(\mathcal{U})$ be the set of all probability distributions in~$\mathcal{U}$. In this paper, we assume that $\tilde{\pmb{a}}$ is a random vector with probability distribution in $\mathcal{P}(\mathcal{U})$. The true probability distribution ${\rm P}$ for $\tilde{\pmb{a}}$ can be unknown or only partially known. The latter case can be expressed by providing an \emph{ambiguity set of probability distributions} $\mathcal{P}\subseteq \mathcal{P}(\mathcal{U})$, 
  so that ${\rm P}$ is only known to belong to~$\mathcal{P}$ with high confidence.
  %This method of modeling  uncertainty in~(\ref{uc}) is assumed in the paper.
     There are many ways of constructing the set~$\mathcal{P}$. Most of them are based on statistical data available for $\tilde{\pmb{a}}$, which enables us to estimate the moments (expected values or covariance matrix) or the distribution shape (see, e.g.,~\cite{DY10,WKS14,EK18}). In the absence of statistical data, one can try 
   to estimate $\mathcal{P}$ using experts' opinions. In Sections~\ref{sec2} and~\ref{sec3}
   we will show how to construct an ambiguity set of probability distributions~$\mathcal{P}$ by
   using a link between fuzzy and stochastic models established by possibility theory.

In order to convert~(\ref{uc}) into a deterministic equivalent, one can use the following 
\emph{distributionally robust constraint}~\cite{WKS14, DY10}:
 \begin{equation}
	\label{uce}
	\sup_{{\rm P}\in \mathcal{P}}\E_{{\rm P}}[\tilde{\pmb{a}}^T\pmb{x}]\leq b
\end{equation}
or equivalently 
\[
\E_{{\rm P}}[\tilde{\pmb{a}}^T\pmb{x}]\leq b \; \forall\, {\rm P}\in \mathcal{P},
\]
where $\E_{{\rm P}}[\cdot]$ is the expected value with respect to the probability distribution ${\rm P}\in \mathcal{P}$.  If $\mathcal{P}=\mathcal{P}(\mathcal{U})$, then~(\ref{uce}) reduces to the following \emph{strict robust constraint}~\cite{BN09}:
 \begin{equation}
	\label{uce1}
	\sup_{\pmb{a}\in \mathcal{U}} \pmb{a}^T\pmb{x}\leq b.
\end{equation}
Indeed, if $\mathcal{P}=\mathcal{P}(\mathcal{U})$, then probability equal to~1 is assigned to scenario $\pmb{a}\in \mathcal{U}$ maximizing $\pmb{a}^T\pmb{x}$ (i.e. to a worst scenario for $\pmb{x}$). The strict robust constraint is known to be very conservative, as it can lead to a large deterioration of the quality of the solutions computed. One can limit the number of scenarios by adding a budget to $\mathcal{U}$~\cite{BS04} to overcome this drawback. This budget narrows the set $\mathcal{U}$ to scenarios that are at some prescribed distance to a given \emph{nominal scenario} $\hat{\pmb{a}}\in \mathcal{U}$.  We thus assume that scenarios far from $\hat{\pmb{a}}$ are unlikely to occur.

We can generalize~(\ref{uce}) by introducing a convex non-decreasing function $g:\Rset\rightarrow \Rset$, and considering the following constraint:
 \begin{equation}
	\label{uceu}
	\sup_{{\rm P}\in \mathcal{P}}\E_{{\rm P}}[g(\tilde{\pmb{a}}^T\pmb{x})]\leq b.
\end{equation}
For example, the function~$g$ can be interpreted as a \emph{disutility} for the values of $\pmb{a}^T\pmb{x}$, where $\pmb{a}$ is a scenario in $\mathcal{U}$. In particular,~$g$ can be of the form $g(y)=y$, in which case we get~(\ref{uce}).
Notice that $g(\tilde{\pmb{a}}^T\pmb{x})$ is a function of  the random variable $\tilde{\pmb{a}}^T\pmb{x}$, so it is 
also
 a random variable.

By using the expectation in~(\ref{uceu}), one assumes risk neutrality of decision-makers. 
In order to take an individual decision-maker's risk aversion into account, one can replace the expectation with the \emph{Conditional Value At Risk} (called also \emph{Expected Shortfall})~\cite{RU00}. If ${\rm X}$ is a real random variable, then
the Conditional Value At Risk with respect to the probability distribution ${\rm P}\in \mathcal{P}$ is defined as follows:
\begin{equation}
\label{defcvar}
{\rm CVaR}_{\mathrm{P}}^{\epsilon}[{\rm X}]=\inf_{t\in\Rset}(t+\frac{1}{1-\epsilon}\E_{{\rm P}}[{\rm X}-t]_{+}),
\end{equation}
where  $\epsilon\in [0,1)$ is a given risk level and $[x]_{+}=\max\{0,x\}$. Notice that ${\rm CVaR}_{\mathrm{P}}^{0}[{\rm X}]={\rm E}_{\mathrm{P}}[{\rm X}]$. On the other hand, if $\epsilon\rightarrow 1$, then ${\rm CVaR}_{\mathrm{P}}^{\epsilon}[{\rm X}]$ is equal to the maximal value that ${\rm X}$ can take. We will consider the following constraint:
\begin{equation}
\sup_{{\rm P}\in \mathcal{P}} {\rm CVaR}_{\mathrm{P}}^{\epsilon}[g(\tilde{\pmb{a}}^T\pmb{x})]\leq b
\label{uccvar1}
\end{equation}
or equivalently, by using~(\ref{defcvar}):
 \begin{equation}
	\label{uccvar}
	\sup_{{\rm P}\in \mathcal{P}} \inf_{t\in \Rset}(t+\frac{1}{1-\epsilon}\E_{{\rm P}}[g(\tilde{\pmb{a}}^T\pmb{x})-t]_{+})\leq b.
\end{equation}
The parameter $\epsilon\in [0,1)$ models risk aversion of
decision-makers. If $\epsilon=0$, then~(\ref{uccvar}) is equivalent to~(\ref{uceu}). On the other hand, when $\epsilon\rightarrow 1$, then~(\ref{uccvar}) becomes
\begin{equation}
	\label{uce3}
	\sup_{\pmb{a}\in \mathcal{U'}} g(\pmb{a}^T\pmb{x})\leq b,
\end{equation}
where $\mathcal{U}'\subseteq \mathcal{U}$ is the subset of scenarios that can occur with a positive probability for some distribution in $\mathcal{P}$. Thus, the approach  based on 
the Conditional Value At Risk generalizes~(\ref{uce1}) and~(\ref{uceu}).
Moreover,
the CVaR criterion can be used to establish a conservative approximation of the chance constraints~\cite{NS06}, namely
\begin{equation}
\label{cvarappr}
{\rm CVaR}_{\mathrm{P}}^{\epsilon}[g(\tilde{\pmb{a}}^T\pmb{x})]\leq b \Rightarrow {\rm P}(g(\tilde{\pmb{a}}^T\pmb{x})\leq b)\geq \epsilon.
\end{equation}
 Using condition~(\ref{cvarappr}) we conclude that a solution feasible to~(\ref{uccvar1}) is also feasible to the following  \emph{robust chance constraint}:
\begin{equation}
\label{chancec}
\inf_{{\rm P}\in \mathcal{P}} {\rm P}(g(\tilde{\pmb{a}}^T\pmb{x})\leq b)\geq \epsilon.
\end{equation}
Optimization models with~(\ref{chancec}) can be hard to solve.
 They are typically nonconvex, whereas the models with~(\ref{uccvar1}) can lead to more tractable optimization problems.

\section{Possibility distribution-based model for uncertain constraints}
\label{sec2}
%\section{Possibilistic model of uncertainty}

Let $\Omega$ be a set of alternatives. A primitive object of possibility theory (see, e.g.,~\cite{BD06}) is a \emph{possibility distribution} $\pi: \Omega\rightarrow [0,1]$, which assigns to each element $u\in \Omega$ a \emph{possibility degree} $\pi(u)$. We only assume that $\pi$ is \emph{normal}, i.e. there is $u\in \Omega$ such that $\pi(u)=1$. The possibility distribution can be built by using available data or experts' opinions (see, e.g.,~\cite{DP88}). A possibility distribution $\pi$ induces the following \emph{possibility} and \emph{necessity measures} in~$\Omega$:
\begin{align}
\Pi(A)&=\sup_{u\in A} \pi(u),\; A \subseteq \Omega, \label{possdef}\\
{\rm N}(A)&=1-\Pi(A^c)=1-\sup_{u\in A^c} \pi(u),\; A\subseteq \Omega, \label{necdef}
\end{align}
where $A^c=\Omega\setminus A$ is the complement of event~$A$.
In this paper we assume that $\pi$ represents uncertainty in $\Omega$, i.e. some knowledge about the uncertain quantity $\tilde{u}$ taking values in $\Omega$. 
We now recall, following~\cite{DD06}, a probabilistic interpretation of the pair $[\Pi, {\rm N}]$ induced by the possibility distribution $\pi$.
 Define 
 \begin{equation}
 \mathcal{P}_\pi=\{{\rm P}: \forall A \text{ measurable } {\rm N}(A)\leq {\rm P}(A)\}
                         =\{{\rm P}: \forall A \text{ measurable } \Pi(A)\geq {\rm P}(A)\}.\label{defpm}
 \end{equation} 
 In this case $\sup_{{\rm P}\in \mathcal{P}_\pi} {\rm P}(A)=\Pi(A)$ and $\inf_{{\rm P}\in \mathcal{P}_\pi} {\rm P}(A)={\rm N}(A)$ (see~\cite{DD06, DP92, CA99}).  Therefore, the possibility distribution~$\pi$ in $\Omega$ encodes a family of probability measures in~$\Omega$. Any probability measure ${\rm P}\in \mathcal{P}_\pi$ is said to be \emph{consistent with}~$\pi$ and for any event $A\subseteq \Omega$ the inequalities
\begin{equation} 
\label{eqnpp}
 {\rm N}(A)\leq {\rm P}(A)\leq \Pi(A)
 \end{equation}
 hold.
  A detailed discussion on the expressive power of~(\ref{eqnpp}) can also be found in~\cite{TMD13}.  Using the assumption that $\pi$ is normal, one can show that $\mathcal{P}_\pi$ is not empty. In fact, the probability distribution such that ${\rm P}(\{u\})=1$ for some $u\in \Omega$ with $\pi(u)=1$ is in $\mathcal{P}_\pi$. To see this, consider two cases. If $u\notin A$, then ${\rm N}(A)=0$ and ${\rm P}(A)\geq {\rm N}(A)=0$. If $u\in A$, then ${\rm P}(A)=1\geq {\rm N}(A)$. 
    
Assume that $\pi:\mathbb{R}^n\rightarrow [0,1]$ is a \emph{joint possibility distribution} for the vector of uncertain parameters~$\tilde{\pmb{a}}$ in the constraint~(\ref{uc}). The value of $\pi(\pmb{a})$, $\pmb{a}\in \mathbb{R}^n$, is the possibility degree for scenario $\pmb{a}$.
  A detailed method of constructing $\pi$ will be shown in Section~\ref{sec3}. By~(\ref{defpm}), the possibility distribution $\pi$ induces
  an ambiguity set~$\mathcal{P}_\pi$ of probability distributions
  for~$\tilde{\pmb{a}}$, consistent with~$\pi$. 
  Define
  \begin{equation} 
  \mathcal{C}(\lambda)=\{\pmb{a}\in \mathbb{R}^n: \pi(\pmb{a})\geq \lambda\}, \; \lambda\in (0,1].
  \label{ccut}
  \end{equation}
  The set $\mathcal{C}(\lambda)$ contains all scenarios $\pmb{a}\in \mathbb{R}^n$, whose possibility of occurrence is at least $\lambda$.
   We will also use $\mathcal{C}(0)$ to denote the \emph{support} of $\pi$, i.e the smallest  subset (with respect to inclusion) 
   of $\Rset^n$ such that ${\rm N}(\mathcal{C}(0))=1$. By definition~(\ref{ccut}), $\mathcal{C}(\lambda)$, $\lambda\in [0,1]$, is a family of nested sets, i.e. $\mathcal{C}(\lambda_2)\subseteq \mathcal{C}(\lambda_1)$ if $0\leq \lambda_1\leq \lambda_2\leq 1$. We will make the following, not particularly restrictive, assumptions about~$\pi$:
  \begin{assum} \;
  \label{assump1}
  \begin{enumerate}
	\item There is a scenario $\hat{\pmb{a}}\in \Rset^n$ such that $\pi(\hat{\pmb{a}})=1$.
  	\item $\pi$ is continuous in $\mathcal{C}(0)$.
	\item The support $\mathcal{C}(0)$ of $\pi$ coincides with the scenario set~$\mathcal{U}$ and is a compact set.
  	\item The sets $\mathcal{C}(\lambda)$ for $\lambda\in [0,1]$ are closed and convex.
	\item The sets $\mathcal{C}(\lambda)$ for $\lambda\in [0,1)$ have non-empty interiors.
	\end{enumerate}
  \end{assum}
Assumption~\ref{assump1}.1 is a normalization one that must be satisfied by any possibility distribution. There is at least one scenario~$\hat{\pmb{a}}$, called a \emph{nominal scenario}, whose possibility of occurrence is equal to~1. Assumption~\ref{assump1}.2 will allow us to represent the ambiguity set $\mathcal{P}_\pi$ as a family of nested confidence sets. Assumption~\ref{assump1}.3 means that the compact set $\mathcal{C}(0)$ contains all possible scenarios, i.e. realizations of the uncertain vector~$\tilde{\pmb{a}}$, so it coincides with the scenario set $\mathcal{U}$. The last two assumptions will allow us to construct some linear or second-order cone reformulations of the constraint~(\ref{uccvar1}).

The assumptions about $\pi$ imply the following equation:
 $${\rm N}(\mathcal{C}(\lambda))=1-\lambda, \; \lambda\in [0,1].$$
 So, the degree of necessity (certainty) of~$\mathcal{C}(\lambda)$ (event ``any $\pmb{a}\in \mathcal{C}(\lambda)$'')
  equals $1-\lambda$.
 Hence, $\mathcal{C}(\lambda)$ is a confidence set, with the confidence (necessity) degree equal to $1-\lambda$.
 The continuity of $\pi$ allows us also to establish the following  formulation of the ambiguity set $\mathcal{P}_\pi$ (see~\cite{GKZ21b}):
$$\mathcal{P}_\pi=\{{\rm P}\in \mathcal{P}(\mathcal{U}) : {\rm P}(\mathcal{C}(\lambda))\geq {\rm N}(\mathcal{C}(\lambda))=1-\lambda, \lambda\in [0,1]\}.$$
Thus $\mathcal{P}_\pi$ is represented 
by a family of confidence sets $\mathcal{C}(\lambda)$, $ \lambda\in [0,1]$, with confidence (necessity) degrees 
$1-\lambda$.	

In order to construct a tractable reformulation of the constraint~(\ref{uccvar1}), we will use the following approximation of $\mathcal{P}_\pi$. Let $\ell$ be a positive integer and fix $\Lambda=\{0,1,\dots,\ell\}$.  Define $\lambda_i=i/\ell$ for $i\in \Lambda$. Consider the following ambiguity set:
\begin{equation}
\mathcal{P}^{\ell}_\pi=\{{\rm P}\in \mathcal{P}(\mathcal{U}) : {\rm P}(\mathcal{C}(\lambda_i))\geq {\rm N}(\mathcal{C}(\lambda_i))=
1-\lambda_i, i\in \Lambda\}.
\label{asi}
\end{equation}
Set $\mathcal{P}^{\ell}_\pi$ is a discrete approximation of $\mathcal{P}_\pi$. It is easy to verify that $\mathcal{P}_\pi\subseteq \mathcal{P}^{\ell}_\pi$ for any $\ell> 0$ (if ${\rm P}\in \mathcal{P}_{\pi}$, then ${\rm P}\in \mathcal{P}^{\ell}_{\pi}$). By fixing sufficiently large constant $\ell$, we obtain an arbitrarily close approximation of $\mathcal{P}_\pi$. 
The following theorem is the key result of the paper:
\begin{thm}
\label{tuccvar2}
	The constraint~(\ref{uccvar1}) for the ambiguity set~$\mathcal{P}=\mathcal{P}^{\ell}_\pi$, i.e.
\begin{equation}
\sup_{{\rm P}\in \mathcal{P}^{\ell}_\pi} {\rm CVaR}_{\mathrm{P}}^{\epsilon}[g(\tilde{\pmb{a}}^T\pmb{x})]\leq b,
\label{uccvar2}
\end{equation}
is equivalent to the following semi-infinite system of constraints:
\begin{align}
&w+\sum_{i\in \Lambda}(\lambda_i-1) v_i\leq (b-t)(1-\epsilon)&\label{repruccvar1}\\
&w-\sum_{j\leq i}  v_j\geq 0 &\forall  i\in \Lambda \label{repruccvar2}\\
&w-\sum_{j\leq i}  v_j+t \geq  g(\pmb{a}^T\pmb{x}) \;\;\;  \forall \pmb{a} \in \mathcal{C}(\lambda_i),
				 &\forall  i\in \Lambda \label{repruccvar3}\\
		&v_i\geq 0 &\forall  i\in \Lambda\label{repruccvar4}\\
	&w,t\in \Rset.&	\label{repruccvar5}
\end{align}
\end{thm}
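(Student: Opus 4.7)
The plan is to unfold the CVaR inside (\ref{uccvar2}), swap the resulting $\sup$-$\inf$, and then dualize the inner moment problem over $\mathcal{P}^{\ell}_\pi$; the target form emerges after exploiting the nested structure of the level cuts $\mathcal{C}(\lambda_i)$. Substituting (\ref{defcvar}) into (\ref{uccvar2}) gives
\[
\sup_{{\rm P}\in \mathcal{P}^{\ell}_\pi}\inf_{t\in\Rset}\Bigl(t+\tfrac{1}{1-\epsilon}{\rm E}_{{\rm P}}[g(\tilde{\pmb{a}}^T\pmb{x})-t]_{+}\Bigr)\leq b.
\]
I would then apply Sion's minimax theorem to exchange $\sup$ and $\inf$: the bracketed expression is linear (hence concave) in ${\rm P}$ and convex in $t$; $\mathcal{P}^{\ell}_\pi$ is convex and weak-$*$ compact (probability measures on the compact scenario set $\mathcal{U}$ cut out by finitely many linear inequalities, cf.\ Assumption~\ref{assump1}.3); and $t$ can be confined to a compact interval because $g(\pmb{a}^T\pmb{x})$ is bounded on $\mathcal{U}$. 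After the swap, (\ref{uccvar2}) becomes equivalent to the existence of $t\in\Rset$ with
\[
\sup_{{\rm P}\in \mathcal{P}^{\ell}_\pi}{\rm E}_{{\rm P}}\bigl[g(\tilde{\pmb{a}}^T\pmb{x})-t\bigr]_{+}\leq (b-t)(1-\epsilon).
\]

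Next I would recognize the inner supremum as a generalized moment problem (an infinite-dimensional LP over probability measures on $\mathcal{U}$) and form its Lagrangian dual. Attaching a free multiplier $w$ to $\int d{\rm P}=1$ and nonnegative multipliers $v_i$ to the bounds ${\rm P}(\mathcal{C}(\lambda_i))\geq 1-\lambda_i$, the dual minimizes $w+\sum_{i\in\Lambda}(\lambda_i-1)v_i$ subject to $v_i\geq 0$ and
\[
w-\sum_{i:\,\pmb{a}\in\mathcal{C}(\lambda_i)}v_i\;\geq\;\bigl[g(\pmb{a}^T\pmb{x})-t\bigr]_{+}\quad\forall\,\pmb{a}\in\mathcal{U}.
\]
Strong duality follows from standard results on moment problems: the integrand is continuous on the compact set $\mathcal{U}$, and the Dirac mass at the nominal scenario $\hat{\pmb{a}}$ lies in $\mathcal{P}^{\ell}_\pi$ (Assumption~\ref{assump1}.1), providing the required feasibility. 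Now I would exploit the nestedness $\mathcal{C}(\lambda_0)\supseteq\cdots\supseteq\mathcal{C}(\lambda_\ell)$: the index set $\{i:\pmb{a}\in\mathcal{C}(\lambda_i)\}$ is always a prefix $\{0,1,\dots,k(\pmb{a})\}$ of $\Lambda$. Splitting $[\,\cdot\,]_{+}=\max\{0,\cdot\}$ and using $v_i\geq 0$, the single dual constraint decomposes into
\[
w-\sum_{j\leq i}v_j\geq 0\quad\text{and}\quad w-\sum_{j\leq i}v_j+t\geq g(\pmb{a}^T\pmb{x})\quad\forall\,\pmb{a}\in\mathcal{C}(\lambda_i),\;\forall\,i\in\Lambda,
\]
where monotonicity of $\sum_{j\leq i}v_j$ in $i$ makes enforcing these for every $i\in\Lambda$ equivalent to enforcing them at the tightest index $i=k(\pmb{a})$. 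These are (\ref{repruccvar2})--(\ref{repruccvar3}); requiring the dual objective to satisfy the bound $(b-t)(1-\epsilon)$ yields (\ref{repruccvar1}); and the domain/sign constraints produce (\ref{repruccvar4})--(\ref{repruccvar5}).

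The main obstacle will be making the two exchanges rigorous: verifying the hypotheses of Sion's theorem in the appropriate weak-$*$ topology, and establishing strong duality for the infinite-dimensional moment LP (which generally demands a constraint qualification, supplied here by the normality assumption). The remaining simplifications are purely algebraic and rely entirely on the nestedness of the level cuts $\mathcal{C}(\lambda_i)$ guaranteed by Assumption~\ref{assump1}.
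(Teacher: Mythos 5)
Your proposal follows the paper's own route almost exactly: unfold the CVaR via (\ref{defcvar}), exchange $\sup$ and $\inf$ by a minimax theorem (the paper cites von Neumann~\cite{N28}, you cite Sion -- same step), recognize the inner supremum as a generalized moment problem over $\mathcal{P}^{\ell}_\pi$, pass to its Lagrangian dual with multiplier $w$ for the normalization and $v_i\geq 0$ for the confidence-level constraints, invoke strong duality for moment problems (the paper cites~\cite{KL19}), and finally unwind the dual constraint using the nestedness of the cuts. The one place where you genuinely diverge is the last simplification. The paper partitions the support into disjoint shells $\overline{\mathcal{C}}(\lambda_i)=\mathcal{C}(\lambda_i)\setminus\mathcal{C}(\lambda_{i+1})$, writes the dual constraint shell by shell, and then argues that, since $g$ is convex, the maximum of $g(\pmb{a}^T\pmb{x})$ over $\overline{\mathcal{C}}(\lambda_i)$ coincides with the maximum over all of $\mathcal{C}(\lambda_i)$ (attained on the boundary), so the shells may be replaced by the full cuts to obtain~(\ref{repruccvar3}). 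You instead observe that $\{i:\pmb{a}\in\mathcal{C}(\lambda_i)\}$ is a prefix of $\Lambda$ and that $\sum_{j\leq i}v_j$ is nondecreasing in $i$ because $v_j\geq 0$, so imposing the constraint for every $i$ with $\pmb{a}\in\mathcal{C}(\lambda_i)$ is equivalent to imposing it only at the deepest index $k(\pmb{a})$. Your version is slightly cleaner: it does not use convexity of $g$ at this step and sidesteps the paper's implicit reliance on $\mathcal{C}(\lambda_{i+1})$ lying in the interior of $\mathcal{C}(\lambda_i)$ (Assumption~\ref{assump1}.5). You are also more explicit than the paper about the hypotheses needed for the minimax exchange (weak-$*$ compactness of $\mathcal{P}^{\ell}_\pi$, boundedness of $t$) and for strong duality (a feasible distribution supplied by the Dirac mass at $\hat{\pmb{a}}$); the paper delegates both to citations. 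No gap.
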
  
  \begin{proof}
Using~(\ref{uccvar}) for
  $\mathcal{P}=\mathcal{P}^{\ell}_\pi$, we can express the left-hand side of~(\ref{uccvar2}) 
  as follows:
  $$\max_{{\rm P}\in \mathcal{P}^{\ell}_\pi} \min_{t\in \Rset}\left (t+\frac{1}{1-\epsilon}\int_{\mathcal{C}(\lambda_0)} [g(\pmb{a}^T\pmb{x})-t]_{+} \; {\rm d} \, \mathrm{P}(\pmb{a})\right).$$
  The function $t+\frac{1}{1-\epsilon}\int_{\mathcal{C}(\lambda_0)} [g(\pmb{a}^T\pmb{x})-t]_{+} \; {\rm d} \, \mathrm{P}(\pmb{a})$ is real-valued, linear in $\mathrm{P}$ and convex in $t$ (see~\cite{DY10}). Hence, by the minimax theorem, it is equivalent to
    $$\min_{t\in \Rset}  \left(t+\frac{1}{1-\epsilon}\max_{{\rm P}\in \mathcal{P}^{\ell}_\pi}\int_{\mathcal{C}(\lambda_0)} [g(\pmb{a}^T\pmb{x})-t]_{+} \; {\rm d}\, \mathrm{P}(\pmb{a})\right).$$
  In consequence, the inequality~(\ref{uccvar2}) holds if and only if there is $t$ such that
    \begin{equation}
    \label{e00} 
     \max_{{\rm P}\in \mathcal{P}^{\ell}_\pi}\int_{\mathcal{C}(\lambda_0)} [g(\pmb{a}^T\pmb{x})-t]_{+} \; {\rm d}\, \mathrm{P}(\pmb{a})\leq (b-t)(1-\epsilon).
     \end{equation}
     Taking into account the form of the ambiguity set~$\mathcal{P}^{\ell}_\pi$ (see~(\ref{asi}))
     the left-hand side of~(\ref{e00}) can be expressed as the following \emph{problem of moments}:
  \begin{equation}
\label{mod10}
	\begin{array}{lll}
		\max &  \displaystyle \int_{\mathcal{C}(\lambda_0)} [g(\pmb{a}^T\pmb{x})-t]_{+} \; {\rm d}\, \mathrm{P}(\pmb{a}) \\
			& \displaystyle \int_{\mathcal{C}(\lambda_0)} \pmb{1}_{[\pmb{a}\in \mathcal{C}(\lambda_i)]} \; {\rm d}
			\, \mathrm{P}(\pmb{a})\geq 1-\lambda_i &\forall  i\in \Lambda\\
			& \displaystyle  \int_{\mathcal{C}(\lambda_0)} {\rm d} \, \mathrm{P}(\pmb{a})=1\\
			&  \mathrm{P} \in \mathcal{M}_{+}(\mathbb{R}^n),
	\end{array}
\end{equation}
where $\pmb{1}_{[A]}$ denotes the indicator function of an event~$A$
and
$\mathcal{M}_{+}(\mathbb{R}^n)$ is the set of all non-negative measures on~$\mathbb{R}^n$.  
The dual of  the problem of moments takes the following form (see, e.g.,~\cite{KL19}):
\begin{equation}
\label{mod20}
	\begin{array}{llll}
			\min  & \displaystyle w+\sum_{i\in \Lambda}(\lambda_i-1) v_i\\
				& \displaystyle w-\sum_{i\in \Lambda} \pmb{1}_{[\pmb{a}\in \mathcal{C}(\lambda_i)]} v_i\geq [g(\pmb{a}^T\pmb{x})-t]_{+} & \forall \pmb{a} \in \mathcal{C}(\lambda_0) \\
				& v_i\geq 0 &\forall  i\in \Lambda\\
				& w\in \Rset, &	
\end{array}
\end{equation}
where $v_i$, $i\in \Lambda$,  and $w$ are dual variables that correspond to
the constraints of the primal problem~(\ref{mod10}).
Strong duality
 implies that the optimal objective values of~(\ref{mod10}) and (\ref{mod20}) are the same
(see, e.g.,~\cite[Theorem~1, Corollary~1]{KL19}).
Define 
$$\overline{\mathcal{C}}(\lambda_i)=\mathcal{C}(\lambda_i)\setminus \mathcal{C}(\lambda_{i+1}),\; i\in\{0,\dots,\ell-1\}$$
and $\overline{\mathcal{C}}(\lambda_{\ell})=\mathcal{C}(\lambda_{\ell})$.
Hence $\overline{C}(\lambda_i)$, $i\in \Lambda$, form a partition of support~$\mathcal{C}(\lambda_0)$ into $\ell+1$ disjoint sets.
The model~(\ref{mod20}) can then be rewritten as follows:
\begin{equation}
\label{mod30}
	\begin{array}{llll}
			\min  & \displaystyle w+\sum_{i\in \Lambda}(\lambda_i-1) v_i\\
				& \displaystyle w-\sum_{j\in \Lambda} \pmb{1}_{[\pmb{a}\in \mathcal{C}(\lambda_j)]} v_j\geq  [g(\pmb{a}^T\pmb{x})-t]_{+}  & \forall \pmb{a} \in \overline{\mathcal{C}}(\lambda_i), 
				&  \forall  i\in \Lambda \\
				& v_i\geq 0 &&\forall  i\in \Lambda\\
				&w\in \Rset,&
	\end{array}
\end{equation}
which is equivalent to
\begin{equation}
\label{mod40}
	\begin{array}{llll}
			\min  & \displaystyle w+\sum_{i\in \Lambda}(\lambda_i-1) v_i\\
				& \displaystyle w-\sum_{j\leq i}  v_j\geq  [g(\pmb{a}^T\pmb{x})-t]_{+}  & \forall \pmb{a} \in \overline{\mathcal{C}}(\lambda_i), &\forall   i\in \Lambda \\
				& v_i\geq 0 && \forall   i\in \Lambda\\
				& w\in \Rset &
	\end{array}
\end{equation}
The model~(\ref{mod40}) can be rewritten as
\begin{equation}
\label{mod50}
	\begin{array}{llll}
			\min  & \displaystyle w+\sum_{i\in \Lambda}(\lambda_i-1) v_i\\
				& \displaystyle w-\sum_{j\leq i}  v_j\geq 0 & &\forall   i\in \Lambda \\
				& \displaystyle w-\sum_{j\leq i}  v_j+t \geq  g(\pmb{a}^T\pmb{x})  & \forall \pmb{a} \in \overline{\mathcal{C}}(\lambda_i), &\forall   i\in \Lambda \\
				& v_i\geq 0 && \forall   i\in \Lambda\\
				& w\in \Rset &
	\end{array}
\end{equation}
The semi-infinite system of constraints in~(\ref{mod50}) can be represented as
\begin{equation}
\label{mod60}
\max_{\pmb{a} \in \overline{\mathcal{C}}(\lambda_i)} g(\pmb{a}^T\pmb{x}) \leq w-\sum_{j\leq i}  v_j+t.
\end{equation}
Because $g$ is convex, the maximum in~(\ref{mod60}) is attained at the boundary of $\overline{\mathcal{C}}(\lambda_i)$ which coincides  with the boundary of $\mathcal{C}(\lambda_i)$. In consequence, we can replace $\overline{\mathcal{C}}(\lambda_i)$ with 
$\mathcal{C}(\lambda_i)$ in~(\ref{mod50}), which leads to~(\ref{repruccvar1})-(\ref{repruccvar5}).
\end{proof}

\begin{rem}
\label{rtfc}
The tractability of the formulation~(\ref{repruccvar1})-(\ref{repruccvar5}), and thus~(\ref{uccvar2}), depends on a method of handling the semi-infinite system of constraints~(\ref{repruccvar3}). Notice that, by Assumption~\ref{assump1},  this system is equivalent to
 \begin{equation}
\label{sinfc}
 w-\sum_{j\leq i}  v_j+t\geq \max_{\pmb{a} \in\mathcal{C}(\lambda_i)} g(\pmb{a}^T\pmb{x}), \;\; i\in \Lambda.
\end{equation}
Deciding efficiently if  (\ref{sinfc}) is satisfied depends on coping with the optimization problem on the right-hand side of~(\ref{sinfc}), which, in turn, depends 
on the forms of confidence set  $\mathcal{C}(\lambda_i)$ induced by the possibility distribution~$\pi$ (see~(\ref{ccut})), and function~$g$.
In Sections~\ref{sec3}  and~\ref{stfc}, we will be concerned with
the abovementioned topic.
\end{rem}

\section{Constructing a joint possibility distribution for an uncertain constraint}
\label{sec3}

In this section, we will propose a method to construct a joint possibility distribution $\pi:\mathbb{R}^n\rightarrow[0,1]$ for 
the vector of uncertain parameters~$\tilde{\pmb{a}}$ in the constraint~(\ref{uc}). 
We will also provide a decomposition of~$\pi$ into a family of confidence sets, according to~(\ref{ccut}), which will allow us to deal with~(\ref{sinfc}) and, consequently, with the formulation~(\ref{repruccvar1})-(\ref{repruccvar5}).

Choose a component~$\tilde{a}_j$  of $\tilde{\pmb{a}}=(\tilde{a}_1,\dots,\tilde{a}_n)$, where $\tilde{a}_j$ is 
a real-valued uncertain variable.
 Suppose that the true value of $\tilde{a}_j$ is known to belong to the closed interval $[\hat{a}_j-\underline{a}_j, \hat{a}_j+\overline{a}_j]$, where $\hat{a}_j\in \Rset$ is the nominal (expected) value of $\tilde{a}_j$ and $\underline{a}_j, \overline{a}_j\in \Rset_{+}$ represent the maximum deviations of the value of $\tilde{a}_j$ from $\hat{a}_j$.
The hyperrectangle 
$$\mathcal{I}=[\hat{a}_1-\underline{a}_1,\hat{a}_1+\overline{a}_1]\times\dots\times [\hat{a}_n-\underline{a}_n,\hat{a}_n+\overline{a}_n]$$
 (the Cartesian product of the intervals) contains all possible values of $\tilde{\pmb{a}}$. In practice, $\mathcal{I}$ is often an overestimation of the scenario set $\mathcal{U}$, which is due to correlations among the components of $\tilde{\pmb{a}}$. In particular, it can be highly improbable that all components of $\tilde{\pmb{a}}$ will take extreme values at the same time~\cite{BS04}. Let $\delta(\pmb{a})$ be a distance of scenario $\pmb{a}$ from the nominal scenario $\hat{\pmb{a}}$. We will use the following distance function:
\begin{equation}
\label{disteq}
\delta(\pmb{a})=||\pmb{B}(\pmb{a}-\hat{\pmb{a}})||_{p},
\end{equation}
where $||\cdot||_{p}$ is the $L_p$-norm, $p\geq 1$, and $\pmb{B}\in \mathbb{R}^{m\times n}$ is a given matrix which can be used to model some interactions among the components of $\tilde{\pmb{a}}$. In the simplest case, $\pmb{B}$ is an identity $n\times n$ matrix and $\delta(\pmb{a})=||(\pmb{a}-\hat{\pmb{a}})||_{p}$. Let $\overline{\delta}\geq 0$ be a parameter, called a \emph{budget}, which denotes the maximum possible deviation of the values of $\tilde{\pmb{a}}$ from $\hat{\pmb{a}}$.  It stands for a limit on the amount of uncertainty for~$\tilde{\pmb{a}}$. The idea of the budgeted uncertainty has been proposed in~\cite{BS04}.
Let 
$$\mathcal{D}=\{\pmb{a}\in \mathbb{R}^n: \delta(\pmb{a})\leq \overline{\delta}\}$$
be the set of all scenarios whose distance from the nominal scenario does not exceed $\overline{\delta}$. 
We will assume that the scenario set is defined as $\mathcal{U}=\mathcal{I}\cap \mathcal{D}$.
Hence $\Pi(\mathcal{U}^c)=0$ and, by~(\ref{necdef}), ${\rm N}(\mathcal{U})=1$.
 \begin{figure}[ht!]
\centering
\includegraphics[width=12cm]{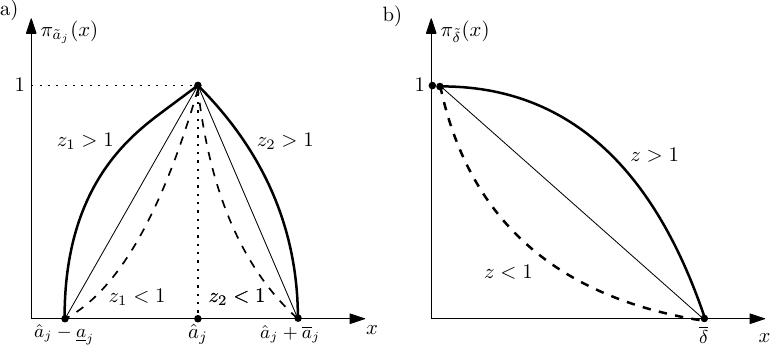}
\caption{Membership functions, $\mu_{\tilde{a}_j}$ and $\mu_{\tilde{\delta}}$, of fuzzy intervals $\tilde{a}_j=\braket{\hat{a}_j,\underline{a}_j,\overline{a}_j}_{z_1-z_2}$
 and $\tilde{\delta}=\braket{0, \overline{\delta}}_{z}$ modeling the possibility distributions for $\tilde{a}_j$ and $\tilde{\delta}$,
 respectively.} \label{fig1}
\end{figure}

Without any additional information, the scenario set $\mathcal{U}$ induces the following joint possibility distribution for $\tilde{\pmb{a}}$: $\pi(\pmb{a})=1$ if $\pmb{a}\in \mathcal{U}$ and $\pi(\pmb{a})=0$, otherwise. In this case, $\mathcal{C}(\lambda)=\mathcal{U}$ for each $\lambda\in [0,1]$ and $\mathcal{P}_{\pi}$ (also $\mathcal{P}^{\ell}_{\pi}$)  contains all probability distributions in $\mathcal{U}$. Constraint~(\ref{uccvar2}) is then equivalent to the strict robust constraint~(\ref{uce3}) with $\mathcal{U}'=\mathcal{U}$.

We will now extend this idea to refine the model of uncertainty. We will make a reasonable assumption that scenarios closer to the nominal scenario $\hat{\pmb{a}}$ will be more possible than those with a greater distance from $\hat{\pmb{a}}$.
Let $\pi_{\tilde{a}_j}$ be a possibility distribution for uncertain parameter~$\tilde{a}_j$ with the support $[\hat{a}_j-\underline{a}_j, \hat{a}_j+\overline{a}_j]$ and $\pi_{\tilde{a}_j}(\hat{a}_j)=1$. The function $\pi_{\tilde{a}_j}$ is continuous, strictly increasing in $[\hat{a}_j-\underline{a}_j]$ and strictly decreasing in $[\hat{a}_j+\underline{a}_j]$. 
Following~\cite{DP88},
one can identify $\pi_{\tilde{a}_j}$ with a membership function~$\mu_{\tilde{a}_j}$ of a fuzzy interval in $\mathbb{R}$
that models~$\tilde{a}_j$, namely $\pi_{\tilde{a}_j}=\mu_{\tilde{a}_j}$. We denote it by 
$\tilde{a}_j=\braket{\hat{a}_j,\underline{a}_j,\overline{a}_j}_{z_1-z_2}$ and 
its membership function~$\mu_{\tilde{a}_j}$
 can be of the following form (see Figure~\ref{fig1}a):
\begin{equation}
	\mu_{\tilde{a}_j}(x)=\left\{\begin{array}{lllll}
				(1+\frac{x-\hat{a}_j}{\underline{a}_j})^{\frac{1}{z_1}} & \text{for} \;x\in [\hat{a}_j-\underline{a}_j,\hat{a}_j]\\
				(1+\frac{\hat{a}_j-x}{\overline{a}_j})^{\frac{1}{z_2}} & \text{for} \; x\in [\hat{a}_j,\hat{a}_j+\overline{a}_j]\\
				0 & \text{otherwise},
			\end{array}\right.
\label{mua}
\end{equation}
where $z_1>0$ and $ z_2>0$ are the shape parameters. One can also use 
 fuzzy intervals of the $L-R$ type~\cite{DP78} to model~$\tilde{a}_j$.
Note that,
 for large $z_1,z_2$, fuzzy interval~$\tilde{a}_j$ tends to the traditional interval $[\hat{a}_j-\overline{a}_j, \hat{a}_j+\overline{a}_j]$. On the other hand, when  $z_1,z_2$ are close to~0, $\tilde{a}_j$ tends to the nominal value $\hat{a}_j$. Therefore, the shape parameters can be used to model the amount of uncertainty for~$\tilde{a}_j$.

Similarly, let $\pi_{\tilde{\delta}}$ be a possibility distribution for the deviation such that $\pi_{\tilde{\delta}}(0)=1$, $\pi_{\tilde{\delta}}$ is continuous strictly decreasing in $[0,\overline{\delta}]$ and $\pi_{\tilde{\delta}}(\delta)=0$ for $\delta\leq 0$ or $\delta\geq \overline{\delta}$. The possibility distribution $\pi_{\tilde{\delta}}$ can be identified with a membership function~$\mu_{\tilde{\delta}}$
of the following fuzzy interval in $\mathbb{R}$, denoted by $\tilde{\delta}=\braket{0,\overline{\delta}}_{z}$
 (see Figure~\ref{fig1}b):
\begin{equation}
	\mu_{\tilde{\delta}}(x)=\left\{\begin{array}{lllll}
				(1-\frac{x}{\overline{\delta}})^{\frac{1}{z}} & x\in [0,\overline{\delta}]\\
				0 & \text{otherwise}.
			\end{array}\right.
\label{mud}
\end{equation}
A detailed interpretation of the possibility distribution of a real-valued uncertain variable and some methods to obtain it from the possessed knowledge are described in~\cite{DP88}.

Let us now define the following joint possibility distribution for scenarios in $\mathbb{R}^n$ of 
uncertain parameters~$\tilde{\pmb{a}}$ in the constraint~(\ref{uc}):
\begin{equation}
\pi(\pmb{a})=\min\{\pi_{\tilde{a}_1}(a_1),\dots,\pi_{\tilde{a}_n}(a_n), \pi_{\tilde{\delta}}(\delta(\pmb{a}))\}.
\label{jpos}
\end{equation}
The first part of $\pi$ is built from the non-interacting marginals $\pi_{\tilde{a}_1},\dots,\pi_{\tilde{a}_n}$ (see~\cite{DDC09}),
which are possibility distributions for the values of uncertain parameters $\tilde{a}_1,\ldots, \tilde{a}_n$, respectively,
and the second part models possible interactions among the components of $\tilde{\pmb{a}}$. Observe that $\pi$ satisfies all the points of Assumption~\ref{assump1}. In particular, $\pi(\hat{\pmb{a}})=1$ for the nominal scenario~$\hat{\pmb{a}}$. 

Let  $[\underline{a}_j(\lambda),\overline{a}_j(\lambda)]$ and $[\underline{\delta}(\lambda), \overline{\delta}(\lambda)]$
be the $\lambda$-cuts of $\tilde{a}_j$ and $\tilde{\delta}$, i.e.
the intervals that contain the values of $\tilde{a}_j$ and $\tilde{\delta}$
such  $\pi_{\tilde{a}_j}(a_j)\geq \lambda$  and $\pi_{\tilde{\delta}}(\delta(\pmb{a}))\geq \lambda$, $\lambda\in (0,1]$, respectively.
Clearly, the $\lambda$-cut of $\tilde{\pmb{a}}$ is $\mathcal{C}(\lambda)$ (see~(\ref{ccut})).
By~(\ref{jpos}) we get $\pi(\pmb{a})\geq \lambda$ if and only if $\pi_{\tilde{a}_j}(a_j)\geq \lambda$ for each $j\in [n]$ and  $\pi_{\tilde{\delta}}(\delta(\pmb{a}))\geq \lambda$, for any $\lambda\in (0,1]$.
Since $\pi_{\tilde{a}_j}(a_j)\geq \lambda$ if and only if $a_j\in [\underline{a}_j(\lambda),\overline{a}_j(\lambda)]$
and $\pi_{\tilde{\delta}}(\delta(\pmb{a}))\geq \lambda$ if and only if $||\pmb{B}(\pmb{a}-\hat{\pmb{a}})||_{p}\leq \overline{\delta}(\lambda)$ for each $\lambda\in (0,1]$, we obtain an alternative representation of the confidence set $\mathcal{C}(\lambda)$:
\begin{equation}
\mathcal{C}(\lambda)=\{\pmb{a}\in\mathbb{R}^n:\, \underline{\pmb{a}}(\lambda)\leq \pmb{a}\leq \overline{\pmb{a}}(\lambda),||\pmb{B}(\pmb{a}-\hat{\pmb{a}})||_{p}\leq \overline{\delta}(\lambda)\}
\label{ccut1}
\end{equation}
for each $\lambda\in (0,1]$, where  $\underline{\pmb{a}}(\lambda)=(\underline{a}_1(\lambda),\dots,\underline{a}_n(\lambda))$ and $\overline{\pmb{a}}(\lambda)=(\overline{a}_1(\lambda),\dots,\overline{a}_n(\lambda))$
are vectors of bounds of the $\lambda$-cuts of~$\tilde{a}_j$, $j\in[n]$.
For  the membership functions~(\ref{mua}) and (\ref{mud}), the bounds have 
the following forms: 
\begin{align}
\underline{a}_j(\lambda)&=\hat{a}_j-\underline{a}_j(1-\lambda^{z_1}),& 
\overline{a}_j(\lambda)&=\hat{a}_j+\overline{a}_j(1-\lambda^{z_2})&\text{ and }\nonumber\\
\underline{\delta}(\lambda)&=0,&
\overline{\delta}(\lambda)&=\overline{\delta}(1-\lambda^{z}),&  \label{cutszz}
\end{align}
respectively.
We also set $\mathcal{C}(0)=\mathcal{I}\cap\mathcal{D}$.

From now on, the confidence set $\mathcal{C}(\lambda_i)$, $i\in \Lambda$,
in~(\ref{sinfc}) (also in the formulation~(\ref{repruccvar1})-(\ref{repruccvar5}))
will be of the form~(\ref{ccut1}).

\section{Tractable formulations of CVaR constraint}
\label{stfc}

In this section, we will construct tractable  reformulations of~(\ref{repruccvar1})-(\ref{repruccvar5}) for some special cases of the joint possibility distribution $\pi$ for~$\tilde{\pmb{a}}$
proposed in Section~\ref{sec3} (see~(\ref{jpos})).
Recall (see Remark~\ref{rtfc}) that the tractability of~(\ref{repruccvar1})-(\ref{repruccvar5}) depends on coping with the optimization
problem on the right-hand side of the constraint~(\ref{sinfc}) for fixed $\pmb{x}$ and $\lambda_i$, $i\in \Lambda$,
in particular, with its inner problem   with linear objective function:
\begin{equation}
\label{mod030}
\max_{\pmb{a} \in \mathcal{C}(\lambda_i)} \pmb{a}^T\pmb{x}.
\end{equation}
We will use the following observation, which results from the fact that $g$ is a non-decreasing convex function:
\begin{obs}
\label{obsconveq}
Let $\pmb{a}$ be an optimal scenario to~(\ref{mod030}) for
given $\pmb{x}\in \Xset$ and $\lambda_i$, $i\in \Lambda$. Then~$\pmb{a}$ also maximizes the right-hand side of~(\ref{sinfc}) for any non-decreasing convex function $g$.
\end{obs}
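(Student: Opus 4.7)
The plan is to exploit the monotonicity of $g$ directly; convexity turns out to be incidental to the core argument. Let $\pmb{a}^\star$ be an optimal scenario to~(\ref{mod030}), so that $\pmb{a}^{\star T}\pmb{x}\geq \pmb{a}^T\pmb{x}$ for every $\pmb{a}\in\mathcal{C}(\lambda_i)$. Applying the non-decreasing function $g$ to both sides of this scalar inequality preserves it, yielding $g(\pmb{a}^{\star T}\pmb{x})\geq g(\pmb{a}^T\pmb{x})$ for every $\pmb{a}\in\mathcal{C}(\lambda_i)$, which is exactly the assertion that $\pmb{a}^\star$ attains the maximum of $g(\pmb{a}^T\pmb{x})$ on $\mathcal{C}(\lambda_i)$, i.e.\ maximizes the right-hand side of~(\ref{sinfc}). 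So the entire content of the proof is the one-line monotonicity transfer from the linear objective to its post-composition with $g$.

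It is worth noting why the hypothesis mentions convexity even though it is not used in the transfer itself. By Assumption~\ref{assump1}, $\mathcal{C}(\lambda_i)$ is compact and convex, and $\pmb{a}\mapsto g(\pmb{a}^T\pmb{x})$ is a convex function (composition of a non-decreasing convex $g$ with an affine map), so the inner maximum in~(\ref{sinfc}) is attained at the boundary of $\mathcal{C}(\lambda_i)$; this is consistent with, and reinforces, the boundary reduction already carried out at the end of the proof of Theorem~\ref{tuccvar2}. The practical consequence, which is the point of stating the observation, is that tractable treatment of the semi-infinite system~(\ref{repruccvar3}) in the remainder of Section~\ref{stfc} can be reduced to solving or dualizing the single linear inner problem~(\ref{mod030}) over $\mathcal{C}(\lambda_i)$, independently of the particular choice of the non-decreasing convex disutility~$g$.

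There is essentially no obstacle, since the only nontrivial verification, attainment of the maximum, is guaranteed by the compactness of $\mathcal{C}(\lambda_i)$ from Assumption~\ref{assump1}.
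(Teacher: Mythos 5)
Your proof is correct and matches the paper's own (essentially unstated) justification: the paper merely asserts that the observation ``results from the fact that $g$ is a non-decreasing convex function,'' and your one-line monotonicity transfer $\pmb{a}^{\star T}\pmb{x}\geq \pmb{a}^T\pmb{x} \Rightarrow g(\pmb{a}^{\star T}\pmb{x})\geq g(\pmb{a}^T\pmb{x})$ is exactly the intended argument. Your side remark is also accurate: monotonicity alone suffices for the statement, while convexity is used elsewhere (for the boundary-attainment step in the proof of Theorem~\ref{tuccvar2} and for the tractability of the inner maximization).
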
 

A further analysis of~(\ref{repruccvar1})-(\ref{repruccvar5}) will fall  into two parts. We first assume that $g$ is an identity function, that is, $g(y)=y$. We will focus on the optimization problem~(\ref{mod030}) and explore the properties of 
 the confidence set $\mathcal{C}(\lambda_i)$ (see~(\ref{ccut1})),  which depend on the $L_p$-norm used in the deviation functions~(\ref{disteq}). In the second part, other kinds of~$g$ will be considered.

\subsection{Identity function $g$}
\label{secident}

The idea of constructing a compact, polynomially solvable formulation of~(\ref{repruccvar1})-(\ref{repruccvar5})
consists in using a dual program to~(\ref{mod030}) and applying duality theorems. We will illustrate this approach using three particular cases of the deviation function~(\ref{disteq}), with the~$L_1$, $L_2$, and $L_{\infty}$ norms.

\subsubsection{The $L_{\infty}$-norm}
In order to simplify the presentation, we assume that $\pmb{B}=\pmb{I}\in \Rset^{n\times n}$ in~(\ref{disteq}), where $\pmb{I}$ is the identity matrix (similar reasoning can be used for any matrix $\pmb{B}$). This assumption is reasonable if there are no statistical data for~$\tilde{\pmb{a}}$ and thus there is no knowledge about the correlations among the components of $\tilde{\pmb{a}}$. If some data are available, then the approach based on the $L_2$  norm (described later) can be more appropriate. Observe that one has to provide only the values of $\hat{a}_j, \underline{a}_j, \overline{a}_j$ for each $j\in [n]$ and $\overline{\delta}$,
 together with the shapes of the  possibility distributions $\pi_{\tilde{a}_j}$   and $\pi_{\tilde{\delta}}$ 
 specified by the parameters $z_1$, $z_2$ and~$z$ (see~(\ref{mua}) and (\ref{mud})).
 In the absence of statistical evidence, all these parameters can be estimated by using, for example, experts' opinions.
Under the above assumptions, the deviation function~(\ref{disteq}) takes the following form:
\begin{equation}
\label{disteqinfty}
\delta(\pmb{a})=||\pmb{a}-\hat{\pmb{a}}||_{\infty}=\max_{j\in [n]} |a_j-\hat{a}_j|,
\end{equation}
i.e. it is  the maximum component-wise deviation of~$\pmb{a}$ from the nominal scenario $\hat{\pmb{a}}$. 
 Applying~(\ref{ccut1}) and (\ref{disteqinfty}) we can rewrite~(\ref{mod030}) as
 the following linear program:
 \begin{align}
	\max \; & \pmb{a}^T\pmb{x}\label{ainf1}\\
		 &a_j\leq \overline{a}_j(\lambda_i) &\forall j\in [n] \label{ainf2}\\
		 &-a_j\leq -\underline{a}_j(\lambda_i) &\forall j\in [n]\label{ainf3}\\
		 &-y_j+a_j\leq \hat{a}_j &\forall j\in [n] \label{ainf4}\\
		 &-y_j-a_j\leq -\hat{a}_j &\forall j\in [n] \label{ainf5}\\
		 & y_j \leq \overline{\delta}(\lambda_i)&\forall  j\in [n] \label{ainf6}\\
		 &y_j\geq 0 & \forall j\in [n] \label{ainf7}\\
		&\pmb{a}\in \Rset^n. \label{ainf8}
\end{align}
 The constant bounds 
 $\underline{a}_j(\lambda_i)$, $\overline{a}_j(\lambda_i)$, and $\overline{\delta}(\lambda_i)$ are determined by~(\ref{cutszz}).
The constraints (\ref{ainf2})-(\ref{ainf7}) model the confidence set $\mathcal{C}(\lambda_i)$ (see~(\ref{ccut1})),
in particular (\ref{ainf4})-(\ref{ainf7}) express the deviation~(\ref{disteqinfty}).
The dual program to (\ref{ainf1})-(\ref{ainf8} is as follows:
 \begin{equation}
	\begin{array}{llll}
	\min &\overline{\pmb{a}}(\lambda_i)^T\pmb{\alpha}_i-\underline{\pmb{a}}(\lambda_i)^T	\pmb{\beta}_i
	+ \hat{\pmb{a}}^T (\pmb{\phi}_i-\pmb{\xi}_i) +\overline{\delta}(\lambda_i)\pmb{1}^T\pmb{\gamma}_i\\
			& \begin{array}{lll}
		 \alpha_{ij}-\beta_{ij}+\phi_{ij}-\xi_{ij}= x_j & \forall j\in [n]\\
		\gamma_{ij}-\phi_{ij}-\xi_{ij}\geq 0 & \forall j\in [n] \\
		\pmb{\alpha}_i, \pmb{\beta}_i,\pmb{\phi}_i, \pmb{\xi}_i, \pmb{\gamma}_i \in \Rset_{+}^n,
		\end{array}
	\end{array}
	\label{dainf}
\end{equation}
where $\pmb{\alpha}_i, \pmb{\beta}_i,\pmb{\phi}_i, \pmb{\xi}_i, \pmb{\gamma}_i$
are vectors of dual variables corresponding to the constraints (\ref{ainf2})-(\ref{ainf6}), respectively, and $\pmb{1}=[1,1,\dots,1]^T$.
The weak duality theorem~(see, e.g.,~\cite[Theorem~5.1]{V14}) 
implies that the optimal value of~(\ref{ainf1}) is  bounded above
by the objective function value of~(\ref{dainf}), which gives
the following 
 counterpart  of Theorem~\ref{tuccvar2}
for the $L_{\infty}$-norm.
\begin{cor}
\label{coinfty}
 The constraint~(\ref{uccvar2}) (formulation~(\ref{repruccvar1})-(\ref{repruccvar5})), where $g$ is an identity function,
  for
  confidence set $\mathcal{C}(\lambda_i)$, $i\in \Lambda$, with
  deviation function~(\ref{disteqinfty})
 is equivalent to the following system of linear constraints:
 \begin{align}
&w+\sum_{i\in \Lambda}(\lambda_i-1) v_i\leq (b-t)(1-\epsilon)&&\label{repruccvar1inf}\\
&w-\sum_{j\leq i}  v_j\geq 0 &&\forall  i\in \Lambda \label{repruccvar2nf}\\
&w-\sum_{j\leq i}  v_j+t \geq  \overline{\pmb{a}}(\lambda_i)^T\pmb{\alpha}_i-\underline{\pmb{a}}(\lambda_i)^T\pmb{\beta}_i+ \hat{\pmb{a}}^T (\pmb{\phi}_i-\pmb{\xi}_i) +\overline{\delta}(\lambda_i)\pmb{1}^T\pmb{\gamma}_i&& \forall  i\in \Lambda
	\label{repruccvar3nf}\\
& \alpha_{ij}-\beta_{ij}+\phi_{ij}-\xi_{ij}= x_j &\forall j\in [n],  &\forall  i\in \Lambda \label{repruccvar4nf}\\
&\gamma_{ij}-\phi_{ij}-\xi_{ij}\geq 0 &\forall j\in [n],  &\forall  i\in \Lambda \label{repruccvar5nf}\\
&\pmb{\alpha}_i, \pmb{\beta}_i,\pmb{\phi}_i, \pmb{\xi}_i, \pmb{\gamma}_i \in \Rset_{+}^n&&\forall  i\in \Lambda \label{repruccvar6nf}\\
&v_i\geq 0 &&\forall  i\in \Lambda\label{repruccvar7nf}\\
&w,t\in \Rset.&&	\label{repruccvar8nf}
\end{align}
\end{cor}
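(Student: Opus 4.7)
My plan is to specialize Theorem~\ref{tuccvar2} to the present setting and eliminate the semi-infinite system of constraints~(\ref{repruccvar3}) by LP duality applied slice-by-slice for each $i\in\Lambda$. Since $g$ is the identity, the binding semi-infinite constraint at level $\lambda_i$ is
\[
w-\sum_{j\leq i} v_j + t \;\geq\; \max_{\pmb{a}\in\mathcal{C}(\lambda_i)} \pmb{a}^T\pmb{x},
\]
so it suffices to rewrite the inner maximization.

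For the $L_\infty$-deviation with $\pmb{B}=\pmb{I}$, the confidence set in~(\ref{ccut1}) is exactly the polyhedron described by the box constraints $\underline{a}_j(\lambda_i)\leq a_j\leq\overline{a}_j(\lambda_i)$ together with $|a_j-\hat a_j|\leq\overline{\delta}(\lambda_i)$. Introducing auxiliary variables $y_j$ that linearize the absolute values, the inner problem becomes the linear program~(\ref{ainf1})--(\ref{ainf8}). Because $\hat{\pmb{a}}\in\mathcal{C}(\lambda_i)$ (and $y_j=0$ is feasible), the primal is feasible; because $\mathcal{C}(\lambda_i)$ is compact by Assumption~\ref{assump1}, the primal is bounded. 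Hence strong LP duality applies and the optimal value of~(\ref{ainf1})--(\ref{ainf8}) coincides with the optimal value of the dual~(\ref{dainf}).

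Now I would combine strong duality with the usual ``inner-min inside outer-feasibility'' trick: the inequality $w-\sum_{j\leq i}v_j+t\geq \max\{\pmb{a}^T\pmb{x}:\pmb{a}\in\mathcal{C}(\lambda_i)\}$ holds if and only if there exist dual multipliers $\pmb{\alpha}_i,\pmb{\beta}_i,\pmb{\phi}_i,\pmb{\xi}_i,\pmb{\gamma}_i\geq \pmb{0}$ satisfying the dual feasibility conditions of~(\ref{dainf}) and such that the dual objective is bounded above by $w-\sum_{j\leq i}v_j+t$. The ``if'' direction uses weak duality, and the ``only if'' direction uses strong duality to produce a dual optimizer. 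Substituting the dual feasibility constraints and this upper-bounding inequality back into the formulation of Theorem~\ref{tuccvar2}, for each $i\in\Lambda$, yields precisely~(\ref{repruccvar3nf})--(\ref{repruccvar6nf}), while constraints~(\ref{repruccvar1inf}), (\ref{repruccvar2nf}), (\ref{repruccvar7nf}), (\ref{repruccvar8nf}) are copied verbatim from~(\ref{repruccvar1})--(\ref{repruccvar5}).

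The main (mild) obstacle is bookkeeping: I have to instantiate one independent block of dual variables for each index $i\in\Lambda$ and make sure their dependence on $\pmb{x}$ enters only through the equality constraints $\alpha_{ij}-\beta_{ij}+\phi_{ij}-\xi_{ij}=x_j$, which keeps the overall system jointly linear in $(\pmb{x},w,t,\pmb{v},\pmb{\alpha},\pmb{\beta},\pmb{\phi},\pmb{\xi},\pmb{\gamma})$. Once this is done, the equivalence follows directly, so the argument is essentially a clean application of LP duality to the inner problem identified by Observation~\ref{obsconveq}.
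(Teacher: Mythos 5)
Your proposal matches the paper's proof: both reduce the semi-infinite constraint~(\ref{sinfc}) for the identity $g$ to the linear program~(\ref{ainf1})--(\ref{ainf8}) over $\mathcal{C}(\lambda_i)$ and replace it by its dual~(\ref{dainf}), instantiating one block of dual multipliers per $i\in\Lambda$. If anything you are slightly more careful than the paper, which explicitly cites only weak duality, whereas the ``only if'' direction of the equivalence does require the strong duality you justify via feasibility ($\hat{\pmb{a}}\in\mathcal{C}(\lambda_i)$) and boundedness (compactness of $\mathcal{C}(\lambda_i)$) of the primal.
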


\subsubsection{The $L_{1}$-norm}
A method for the $L_{1}$-norm is similar to that for the $L_{\infty}$-norm (see also~\cite{GKZ21a}). Likewise,
we will assume that $\pmb{B}=\pmb{I}\in \Rset^{n\times n}$ in~(\ref{disteq}).  Then
\begin{equation}
\label{disteq1}
\delta(\pmb{a})=||\pmb{a}-\hat{\pmb{a}}||_{1}=\sum_{j\in [n]} |a_j-\hat{a}_j|
\end{equation}
is just the total deviation of $\pmb{a}$ from the nominal scenario $\hat{\pmb{a}}$. The constraint $\delta(\pmb{a})\leq \overline{\delta}$ is called a \emph{continuous budget} imposed on possible scenarios. 
The model for~(\ref{mod030}) is nearly the same as (\ref{ainf1})-(\ref{ainf8}). It is enough to replace
the constraints~(\ref{ainf6}) with
\begin{equation}
\sum_{j\in [n]} y_j \leq \overline{\delta}(\lambda_i).
\label{a16}
\end{equation}
Now (\ref{ainf4}), (\ref{ainf5}), (\ref{ainf7}) and (\ref{a16}) express the deviation~(\ref{disteq1}).
The dual program to (\ref{ainf1})-(\ref{ainf5}), (\ref{a16}), (\ref{ainf7})-(\ref{ainf8})  is as follows:
 $$
	\begin{array}{llll}
	\min &\overline{\pmb{a}}(\lambda_i)^T\pmb{\alpha}_i-\underline{\pmb{a}}(\lambda_i)^T	\pmb{\beta}_i
	+ \hat{\pmb{a}}^T (\pmb{\phi}_i-\pmb{\xi}_i) +\overline{\delta}(\lambda_i)\gamma_i\\
			& \begin{array}{lll}
		 \alpha_{ij}-\beta_{ij}+\phi_{ij}-\xi_{ij}= x_j &\forall j\in [n]\\
		\gamma_{i}-\phi_{ij}-\xi_{ij}\geq 0 &\forall j\in [n] \\
		\pmb{\alpha}_i, \pmb{\beta}_i,\pmb{\phi}_i, \pmb{\xi}_i \in \Rset_{+}^n\\
		\gamma_i \geq 0,
		\end{array}
	\end{array}
$$
where  $\gamma_i$ is a dual variable
 and $\pmb{\alpha}_i, \pmb{\beta}_i,\pmb{\phi}_i, \pmb{\xi}_i$,
are vectors of dual variables.
Again,
by the weak duality theorem~(see, e.g.,~\cite[Theorem~5.1]{V14}) we obtain a counterpart of Theorem~\ref{tuccvar2}
for the $L_{1}$-norm.
\begin{cor}
\label{co1}
 The constraint~(\ref{uccvar2}) (formulation~(\ref{repruccvar1})-(\ref{repruccvar5})), where $g$ is an identity function,
  for
  confidence set $\mathcal{C}(\lambda_i)$, $i\in \Lambda$, with
  deviation function~(\ref{disteq1})
 is equivalent to the following system of linear constraints:
 \begin{align}
&w+\sum_{i\in \Lambda}(\lambda_i-1) v_i\leq (b-t)(1-\epsilon)&&\label{repruccvar11}\\
&w-\sum_{j\leq i}  v_j\geq 0 &&\forall  i\in \Lambda \label{repruccvar21}\\
&w-\sum_{j\leq i}  v_j+t \geq  \overline{\pmb{a}}(\lambda_i)^T\pmb{\alpha}_i-\underline{\pmb{a}}(\lambda_i)^T\pmb{\beta}_i+ \hat{\pmb{a}}^T (\pmb{\phi}_i-\pmb{\xi}_i) +\overline{\delta}(\lambda_i)\gamma_i&& \forall  i\in \Lambda
	\label{repruccvar31}\\
& \alpha_{ij}-\beta_{ij}+\phi_{ij}-\xi_{ij}= x_j &\forall j\in [n], &\forall  i\in \Lambda \label{repruccvar41}\\
&\gamma_{i}-\phi_{ij}-\xi_{ij}\geq 0 &\forall j\in [n], &\forall  i\in \Lambda \label{repruccvar51}\\
&\pmb{\alpha}_i, \pmb{\beta}_i,\pmb{\phi}_i, \pmb{\xi}_i \in \Rset_{+}^n&&\forall  i\in \Lambda \label{repruccvar61}\\
&v_i, \gamma_i \geq 0 &&\forall  i\in \Lambda\label{repruccvar71}\\
&w,t\in \Rset.&&	\label{repruccvar81}
\end{align}
\end{cor}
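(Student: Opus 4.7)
The plan is to adapt the proof of Corollary~\ref{coinfty} by swapping in the linear program that describes the $L_1$-confidence set. The starting point is Theorem~\ref{tuccvar2}, which reduces~(\ref{uccvar2}) to the semi-infinite system~(\ref{repruccvar1})-(\ref{repruccvar5}). Since $g(y)=y$ is non-decreasing and convex, I will apply Observation~\ref{obsconveq} to rewrite each instance of the semi-infinite constraint~(\ref{repruccvar3}) as $w-\sum_{j\leq i} v_j + t \geq \max_{\pmb{a}\in \mathcal{C}(\lambda_i)} \pmb{a}^T\pmb{x}$, so that the whole task reduces to giving a tractable representation of this single linear maximum.

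Next I will cast that maximum as an LP by unfolding the $L_1$-ball in the confidence set~(\ref{ccut1}). The standard trick of introducing auxiliary variables $y_j \geq 0$ with $-y_j \leq a_j - \hat{a}_j \leq y_j$ linearises $|a_j - \hat{a}_j|$, and the single aggregate inequality $\sum_{j\in[n]} y_j \leq \overline{\delta}(\lambda_i)$, displayed as~(\ref{a16}), captures the budget $\|\pmb{a}-\hat{\pmb{a}}\|_1 \leq \overline{\delta}(\lambda_i)$. This gives exactly the LP (\ref{ainf1})-(\ref{ainf5}), (\ref{a16}), (\ref{ainf7})-(\ref{ainf8}) recalled in the excerpt.

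I will then take the LP dual. The only structural difference from the $L_\infty$-case is that the single budget inequality contributes one scalar dual variable $\gamma_i \geq 0$ in place of the vector $\pmb{\gamma}_i$, and the constraints dualising the $y_j$'s become $\gamma_i - \phi_{ij} - \xi_{ij} \geq 0$ for $j\in[n]$. This produces precisely the dual displayed immediately before the statement. By strong LP duality, which applies because Assumption~\ref{assump1} makes $\mathcal{C}(\lambda_i)$ a nonempty compact polyhedron, the primal max equals the dual min, and the inequality $w-\sum_{j\leq i} v_j + t \geq \max_{\pmb{a}\in \mathcal{C}(\lambda_i)}\pmb{a}^T\pmb{x}$ holds iff there exist feasible dual variables $(\pmb{\alpha}_i,\pmb{\beta}_i,\pmb{\phi}_i,\pmb{\xi}_i,\gamma_i)$ whose dual objective is at most $w-\sum_{j\leq i} v_j + t$. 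Substituting this characterisation into every instance of~(\ref{repruccvar3}) yields~(\ref{repruccvar31})-(\ref{repruccvar71}), while (\ref{repruccvar11}), (\ref{repruccvar21}) and (\ref{repruccvar81}) are inherited verbatim from Theorem~\ref{tuccvar2}.

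The only substantive point is justifying the equivalence rather than merely one direction: the $L_\infty$-proof in the excerpt cites \emph{weak} duality, which suffices to show that the proposed system implies~(\ref{uccvar2}), but the converse needs strong duality. Compactness of $\mathcal{C}(\lambda_i)$ ensures primal feasibility and boundedness, so strong duality applies and the duality gap vanishes. Apart from this bookkeeping, the proof will be a mechanical transcription of the $L_\infty$-argument with the per-coordinate budget replaced by a single aggregated one.
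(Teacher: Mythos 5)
Your proposal is correct and follows essentially the same route as the paper: linearise the $L_1$-ball with auxiliary variables $y_j$ and the single aggregated budget constraint, dualise the resulting LP (one scalar $\gamma_i$ replacing the vector $\pmb{\gamma}_i$ of the $L_\infty$ case), and substitute the dual objective into the semi-infinite constraints of Theorem~\ref{tuccvar2}. Your remark about strong versus weak duality is well taken --- the paper cites only weak duality, which by itself yields just one direction of the claimed equivalence, and your appeal to feasibility and boundedness of the primal over the compact set $\mathcal{C}(\lambda_i)$ is exactly what is needed to close that gap.
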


\subsubsection{The  $L_{2}$-norm}

Suppose that some statistical data (for example, a set of past realizations) are available for $\tilde{\pmb{a}}$. We can then estimate the mean vector $\hat{\pmb{a}}$ and the covariance matrix~$\pmb{\Sigma}\in \Rset^{n\times n}$ for~$\tilde{\pmb{a}}$. Then we can fix $\underline{a}_j=\overline{a}_j=k\cdot \sigma_j=k\cdot\sqrt{\Sigma_{jj}}$ for each $j\in [n]$ and some constant $k$. By the Chebyshev inequality, we can ensure that $a_j\in [\hat{a}_j-\underline{a}_j,\hat{a}+\overline{a}_j]$ with a high probability.
Fix $\pmb{B}=\pmb{\Sigma}^{-\frac{1}{2}}$ (the inverse of square root of~$\pmb{\Sigma}$). Then
\begin{equation}
\label{disteq2}
\delta(\pmb{a})=||\pmb{B}(\pmb{a}-\hat{\pmb{a}})||_{2}.
\end{equation}
The set $\{\pmb{a}\in \Rset^n: \delta(\pmb{a})\leq \overline{\delta}\}$ is an ellipsoid in $\mathbb{R}^n$. Ellipsoidal uncertainty is commonly used in robust optimization (see, e.g.~\cite{BS04r, BN09}).
The model~(\ref{mod030}) is  now 
 \begin{align}
	\max \; & \pmb{a}^T\pmb{x}\label{ainfe1}\\
		 &a_j\leq \overline{a}_j(\lambda_i) &\forall j\in [n] \label{ainfe2}\\
		 &-a_j\leq -\underline{a}_j(\lambda_i) &\forall j\in [n]\label{ainfe3}\\
		 &||\pmb{B}(\pmb{a}-\hat{\pmb{a}})||_{2}\leq \overline{\delta}(\lambda_i) \label{ainfe4}\\
		&\pmb{a}\in \Rset^n. \label{ainfe8}
\end{align}

The model (\ref{ainfe1})-(\ref{ainfe8}) is \emph{second-order cone optimization problem}, whose dual is (see~\cite{GKZ21b}):
\begin{equation}
\label{mod05}
\begin{array}{lll}
	\min & \displaystyle (\overline{\pmb{a}}(\lambda_i)-\hat{\pmb{a}})^T\pmb{\alpha}_i+ 
	(\hat{\pmb{a}}-\underline{\pmb{a}}(\lambda_i))^T\pmb{\beta}_i+
	 \overline{\delta}(\lambda_i)\gamma_i+\hat{\pmb{a}}^T\pmb{x}\\
	& \begin{array}{ll}
		\alpha_{ij} - \beta_{ij} + \pmb{B}^{T}_j\pmb{u}_i=x_j &\forall j\in [n]\\
		 ||\pmb{u}_i||_{2}\leq \gamma_i \\
		 \pmb{\alpha}_i, \pmb{\beta}_i \in \Rset_{+}^n\\
		 \pmb{u}_i\in \Rset^n \\
		 \gamma_i\geq 0
		 \end{array}
\end{array}
\end{equation}
where $\pmb{B}_j$ is the $j$th column of the matrix $\pmb{B}$,
$\gamma_i$ is a dual variable
and $\pmb{\alpha}_i, \pmb{\beta}_i,\pmb{u}_i$
 and dual variable vectors.
 The weak duality (see, e.g.,~\cite{BV08}) implies that the optimal objective function value of~(\ref{mod030}) is bounded above by
 the objective function value of~(\ref{mod05}).
 Hence~(\ref{mod05}) leads to
 a counterpart  of Theorem~\ref{tuccvar2}
for the $L_{2}$-norm.
\begin{cor}
\label{co2}
 The constraint~(\ref{uccvar2}) (formulation~(\ref{repruccvar1})-(\ref{repruccvar5})), where $g$ is an identity function,
 for
  confidence set $\mathcal{C}(\lambda_i)$, $i\in \Lambda$, with
  deviation function~(\ref{disteq2})
 is equivalent to the following system of second-order cone constraints:
 \begin{align}
&w+\sum_{i\in \Lambda}(\lambda_i-1) v_i\leq (b-t)(1-\epsilon)&&\label{repruccvar12}\\
&w-\sum_{j\leq i}  v_j\geq 0 &&\forall  i\in \Lambda \label{repruccvar22}\\
&w-\sum_{j\leq i}  v_j+t \geq  (\overline{\pmb{a}}(\lambda_i)-\hat{\pmb{a}})^T\pmb{\alpha}_i+   
	(\hat{\pmb{a}}-\underline{\pmb{a}}(\lambda_i))^T\pmb{\beta}_i+
	 \overline{\delta}(\lambda_i)\gamma_i+\hat{\pmb{a}}^T\pmb{x} && \forall  i\in \Lambda\label{repruccvar32}    \\	
&\alpha_{ij} - \beta_{ij} + \pmb{B}^{T}_j\pmb{u}_i=x_j &\forall j\in [n],&\forall  i\in\Lambda\label{repruccvar42}\\
&||\pmb{u}_i||_{2}\leq \gamma_i& &\forall  i\in\Lambda\label{repruccvar52} \\
& \pmb{\alpha}_i, \pmb{\beta}_i \in \Rset_{+}^n&&\forall  i\in\Lambda\label{repruccvar62}\\
& \pmb{u}_i\in \Rset^n&&\forall  i\in\Lambda\label{repruccvar72} \\
&v_i, \gamma_i \geq 0 &&\forall  i\in \Lambda\label{repruccvar82}\\
&w,t\in \Rset.&	&\label{repruccvar92}
\end{align}
\end{cor}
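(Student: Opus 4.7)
The plan is to deduce Corollary~\ref{co2} from Theorem~\ref{tuccvar2} by replacing the semi-infinite constraint~(\ref{repruccvar3}) with a finite second-order cone reformulation. Since $g(y)=y$ is nondecreasing and convex, Remark~\ref{rtfc} together with Observation~\ref{obsconveq} reduces~(\ref{repruccvar3}) to the compact form~(\ref{sinfc}), which under $g(y)=y$ reads
\[
w-\sum_{j\leq i}v_j+t \;\geq\; \max_{\pmb{a}\in\mathcal{C}(\lambda_i)} \pmb{a}^T\pmb{x},\qquad i\in\Lambda.
\]

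For the $L_2$-norm deviation~(\ref{disteq2}), the confidence set $\mathcal{C}(\lambda_i)$ given by~(\ref{ccut1}) is the intersection of a box with an ellipsoid. The inner maximization is then precisely the second-order cone program~(\ref{ainfe1})--(\ref{ainfe8}), whose conic Lagrangian dual is~(\ref{mod05}). I would then invoke strong duality for second-order cone programming: under Slater's condition, the primal and dual optimal values coincide and the dual is attained. Slater's condition follows from Assumption~\ref{assump1}.5, which guarantees that $\mathcal{C}(\lambda_i)$ has nonempty interior for $\lambda_i\in[0,1)$; for the boundary case $\lambda_\ell=1$ (if it appears), the set degenerates and both sides equal $\hat{\pmb{a}}^T\pmb{x}$ trivially. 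Hence the inequality above holds if and only if there exist $\pmb{\alpha}_i,\pmb{\beta}_i\in\Rset_{+}^n$, $\pmb{u}_i\in\Rset^n$ and $\gamma_i\geq 0$ that are feasible to the dual~(\ref{mod05}) and whose dual objective value is at most $w-\sum_{j\leq i}v_j+t$.

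Substituting this ``there exist dual variables'' certificate in place of each instance of~(\ref{repruccvar3}) for $i\in\Lambda$ inside the system~(\ref{repruccvar1})--(\ref{repruccvar5}) — carrying over the outer constraints~(\ref{repruccvar1}),~(\ref{repruccvar2}),~(\ref{repruccvar4}),~(\ref{repruccvar5}) unchanged — yields exactly~(\ref{repruccvar12})--(\ref{repruccvar92}), establishing the claimed equivalence.

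The main obstacle is the careful verification of SOCP strong duality, since the paper's earlier derivations for the $L_1$ and $L_\infty$ cases only appealed to weak duality. Weak duality alone would give only one direction (a conservative inner approximation of~(\ref{uccvar2})); to obtain equivalence one must show that the dual minimum is both attained and equal to the primal maximum. This is precisely what Assumption~\ref{assump1}.5 buys us via Slater's condition, so flagging this assumption is the pivotal step of the argument.
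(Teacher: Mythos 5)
Your proposal follows essentially the same route as the paper: reduce the semi-infinite constraint~(\ref{repruccvar3}) via~(\ref{sinfc}) to the inner maximization~(\ref{mod030}) over $\mathcal{C}(\lambda_i)$, dualize the resulting second-order cone program~(\ref{ainfe1})--(\ref{ainfe8}) to obtain~(\ref{mod05}), and substitute the dual certificate into the system of Theorem~\ref{tuccvar2}. The one substantive difference is the duality argument. The paper justifies the substitution by weak duality only, which — as you correctly observe — gives just the conservative direction: existence of dual-feasible $(\pmb{\alpha}_i,\pmb{\beta}_i,\pmb{u}_i,\gamma_i)$ with dual objective at most $w-\sum_{j\leq i}v_j+t$ implies the original constraint, but not the converse. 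The claimed \emph{equivalence} needs zero duality gap together with dual attainment, i.e.\ strong conic duality, which you supply via Slater's condition using Assumption~\ref{assump1}.5. Flagging and closing this gap is a genuine improvement in rigor over the paper's own write-up (the same remark applies to Corollaries~\ref{coinfty} and~\ref{co1}, where for the LP duals attainment is automatic and only the gap statement is at issue). The one loose end in your argument is the index $i=\ell$, which always occurs since $\lambda_\ell=1\in\Lambda$: there $\mathcal{C}(1)=\{\hat{\pmb{a}}\}$ has empty interior and Slater fails. Your observation that both sides then equal $\hat{\pmb{a}}^T\pmb{x}$ is correct, but to keep the equivalence you should exhibit the attaining dual point explicitly, e.g.\ $\alpha_{\ell j}=[x_j]_{+}$, $\beta_{\ell j}=[-x_j]_{+}$ for $j\in[n]$, $\pmb{u}_\ell=\pmb{0}$, $\gamma_\ell=0$, whose objective is exactly $\hat{\pmb{a}}^T\pmb{x}$ because $\overline{\pmb{a}}(1)=\underline{\pmb{a}}(1)=\hat{\pmb{a}}$ and $\overline{\delta}(1)=0$.
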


\subsection{General convex nondecreasing function $g$}
\label{arbitrg}

Consider the formulation~(\ref{repruccvar1})-(\ref{repruccvar5}) with a given convex nondecreasing function $g$. If  \emph{derivative} or \emph{subderivative} of~$g$ at point~$y_0$,  for each $y_0\in \Rset$
can be provided, then one can apply a polynomial-time ellipsoid algorithm-based approach~\cite{GLS93} to
the constraint~(\ref{uccvar2}) (see~Remark~\ref{rtfc}). To obtain a compact reformulation of the model, one can approximate $g$ by using  a non-decreasing convex piecewise affine function
\begin{equation}
\label{convappr}
g(y)=\max_{z\in \Xi}\{r_z y+ s_z\}, \;z\in \Xi=\{1,\dots,\xi\},
\end{equation} 
where 
$\Xi$ is a finite index set and $s_z\in \Rset$,
$r_z\geq 0$ for each $z\in \Xi$. The quality of the approximation depends on the cardinality of the index set~$\Xi$.

Using Observation~\ref{obsconveq} and the results of Section~\ref{secident}, we can provide compact, polynomially solvable formulations of (\ref{repruccvar1})-(\ref{repruccvar5})
for  deviation functions~(\ref{disteqinfty}),  (\ref{disteq1}) and  (\ref{disteq2}), respectively.
It suffices
to replace constraints~(\ref{repruccvar3nf})  (see Corollary~\ref{coinfty}) with
\begin{align}
w-\sum_{j\leq i}  v_j+t \geq &r_z( \overline{\pmb{a}}(\lambda_i)^T\pmb{\alpha}_i-\underline{\pmb{a}}(\lambda_i)^T\pmb{\beta}_i+&
         \nonumber\\
	&+ \hat{\pmb{a}}^T (\pmb{\phi}_i-\pmb{\xi}_i) +\overline{\delta}(\lambda_i)\pmb{1}^T\pmb{\gamma}_i)+s_z
	\;\; \forall  z\in \Xi,
	& \forall  i\in \Lambda, \label{repruccvar3nfg}
\end{align}
constraints~(\ref{repruccvar31})  (see Corollary~\ref{co1}) with
 \begin{align}
w-\sum_{j\leq i}  v_j+t \geq  &r_z (\overline{\pmb{a}}(\lambda_i)^T\pmb{\alpha}_i-\underline{\pmb{a}}(\lambda_i)^T\pmb{\beta}_i+&
         \nonumber\\
	&+ \hat{\pmb{a}}^T (\pmb{\phi}_i-\pmb{\xi}_i) +\overline{\delta}(\lambda_i)\gamma_i)+s_z
	\;\; \forall  z\in \Xi,
	& \forall  i\in \Lambda
	\label{repruccvar31g}
 \end{align}
 and 
 constraints~(\ref{repruccvar32})  (see Corollary~\ref{co2}) with
 \begin{align}
 w-\sum_{j\leq i}  v_j+t \geq &r_z ((\overline{\pmb{a}}(\lambda_i)-\hat{\pmb{a}})^T\pmb{\alpha}_i+ &  \nonumber\\
	&+(\hat{\pmb{a}}-\underline{\pmb{a}}(\lambda_i))^T\pmb{\beta}_i+
	 \overline{\delta}(\lambda_i)\gamma_i+\hat{\pmb{a}}^T\pmb{x}) +s_z
	 \;\; \forall  z\in \Xi,
	 & \forall  i\in \Lambda,\label{repruccvar32g}
\end{align}
respectively.

%\subsubsection{General Function}
%\label{asaff}
%Consider first that $g$ is a nondecreasing convex function without additional assumptions.
%In this case, one can approximate~$g$ by non-decreasing convex piecewise affine function~(\ref{convappr})
%(the quality of the approximation depends on the cardinality of the index set~$\Xi$) and
%apply the approach for affine functions presented in Section~\ref{saff}.

\section{Solving optimization problems with uncertain coefficients}
\label{sec4}

In this section, we apply the solution concept described previously to various optimization problems. We will characterize the complexity of the resulting deterministic counterparts.
 Consider the following optimization problem with uncertain constraints and objective function:
\begin{equation}
\label{modunc}
	\begin{array}{lll}
		\widetilde{\min} &  \tilde{\pmb{c}}^T\pmb{x}\\
				 & \tilde{\pmb{a}}_i^T\pmb{x}\leq b_i &\forall i\in [m]\\
				 &\pmb{x}\in \mathbb{X},
	\end{array}
\end{equation}
where $\tilde{\pmb{c}}=(\tilde{c}_{j})_{j\in[n]}$, $\tilde{\pmb{a}}_i=(\tilde{a}_{ij})_{j\in[n]}$, $i\in [m]$, are uncertain objective function and constraint coefficients.  The right-hand constraint sides $b_i \in \Rset$, $i\in [m]$, are assumed to be precise (we will show later how to deal with uncertain right-hand sides).
The uncertainty of each coefficient is modeled by a fuzzy interval whose membership function is regarded as a possibility
distribution for its values (according to the interpretation provided in Section~\ref{sec2}).
The set $\mathbb{X}$ specifies the domain of decision variables $\pmb{x}$.

Using the solution concept proposed in this paper, we can transform~(\ref{modunc}) into the following deterministic equivalent:
\begin{equation}
\label{lpcar}
\begin{array}{lll}
		\min & h \\
		                 &\displaystyle \sup_{{\rm P}\in \mathcal{P}^{\ell}_{\pi_0}}
				 {\rm CVaR}_{\mathrm{P}}^{\epsilon}[\tilde{\pmb{c}}^T\pmb{x}]\leq h&\\
				&\displaystyle \sup_{{\rm P}\in \mathcal{P}^{\ell}_{\pi_i}}
				 {\rm CVaR}_{\mathrm{P}}^{\epsilon}[\tilde{\pmb{a}}_i^T\pmb{x}]\leq b_i &\forall i\in [m]\\
				 &h\in \Rset&\\
				 &\pmb{x}\in \mathbb{X},&				 
\end{array}
\end{equation}
where $\pi_0$ and $\pi_i$, $i\in[m]$, are joint possibility distributions
induced by the possibility distributions of the objective function and the constraint coefficients, respectively, 
according to~(\ref{jpos}). If the right-hand sides of the constraints are also uncertain in~(\ref{modunc}), then
the constraints in~(\ref{lpcar}) can be rewritten as 
$\sup_{{\rm P}\in \mathcal{P}^{\ell}_{\pi_i}}
				 {\rm CVaR}_{\mathrm{P}}^{\epsilon}[\tilde{\pmb{a}_i}^T\pmb{x} -\tilde{b}_i x_0]\leq 0$,
where $x_0$ is an auxiliary variable such that $x_0=1$.			 

The solutions to~(\ref{lpcar}) have the following interpretation. Recall that
the parameter~$\epsilon\in [0,1)$ is provided by the decision-maker
and reflects her/his attitude toward risk.   When~$\epsilon=0$, then CVaR becomes the expectation
(which models risk neutrality). This case leads to solutions~$\pmb{x}$ to~(\ref{lpcar})
 that guarantee a long-run performance.
For a greater value of~$\epsilon$ (more risk aversion is taken into account), more attention is paid to the worst outcomes and
solutions~$\pmb{x}$  to~(\ref{lpcar}) become more robust, i.e., guarantee better worst-case performance.

The model~(\ref{lpcar}) has $m+1$ CVaR constraints, which can be represented as systems of linear or second-order cone constraints (see Section~\ref{stfc}).
%of~(\ref{repruccvar1inf})-(\ref{repruccvar8nf}) for the $L_{\infty}$-norm (see Corollary~\ref{coinfty}) or
%of~(\ref{repruccvar11})-(\ref{repruccvar81}) for the $L_{1}$-norm (see Corollary~\ref{co1}) or
%of~(\ref{repruccvar12})-(\ref{repruccvar92}) for the $L_{2}$-norm (see Corollary~\ref{co1}). 
Hence, if $\mathbb{X}$ is a polyhedral set described by a system of linear constraints, then~(\ref{lpcar}) becomes
  linear programming problems (for  the $L_{1}$ or $L_{\infty}$ norms) or
 a second order cone optimization problem (for  the $L_{2}$-norm).
 Both cases can be solved efficiently by standard off-the-shelf solvers.
 \begin{cor}
  If $\mathbb{X}$ is a polyhedral set, then~(\ref{lpcar}) is polynomially solvable for the $L_{1}$, $L_{2}$, or $L_{\infty}$ norms, used in~(\ref{disteq}).
 \end{cor}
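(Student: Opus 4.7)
The plan is to assemble the corollary directly from the reformulations already established in Section~\ref{stfc}, observing that each of the $m+1$ CVaR constraints in~(\ref{lpcar}) falls exactly into the setting treated there, with $g$ the identity function (since both $\tilde{\pmb{c}}^T\pmb{x}$ and $\tilde{\pmb{a}}_i^T\pmb{x}$ are linear in the uncertain parameters).

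First, I would apply Corollary~\ref{coinfty}, Corollary~\ref{co1}, or Corollary~\ref{co2} (according to whether the deviation function in~(\ref{disteq}) uses the $L_\infty$, $L_1$, or $L_2$ norm) separately to the objective CVaR constraint (with joint possibility distribution~$\pi_0$) and to each of the $m$ constraint CVaR inequalities (with joint possibility distributions~$\pi_i$, $i\in[m]$). This replaces constraint~$i$ by a polynomial-size block of either linear constraints (for $L_1$, $L_\infty$) or second-order cone constraints (for $L_2$), each block introducing its own auxiliary variables $w_i,t_i,\{v_{ij}\}_{j\in\Lambda}$ and the dual multiplier vectors (e.g.\ $\pmb{\alpha}_i,\pmb{\beta}_i,\pmb{\phi}_i,\pmb{\xi}_i,\pmb{\gamma}_i$ in the $L_\infty$ case).

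Next, I would verify that the total size of the resulting formulation is polynomial in the input. For each $i\in\{0,1,\dots,m\}$ the block has $O(\ell\cdot n)$ variables and $O(\ell\cdot n)$ constraints, so summing over all $m+1$ blocks gives a formulation of size $O((m+1)\ell n)$. Since $\mathbb{X}$ is polyhedral by assumption, it contributes only a polynomial number of additional linear inequalities, and the coupling variable $h$ together with the two $h$-bounds from~(\ref{lpcar}) fits within this budget as well.

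Finally, I would conclude by invoking the standard polynomial-time solvability of linear programming (in the $L_1$ and $L_\infty$ cases, where the entire reformulation is a linear program) and of second-order cone programming (in the $L_2$ case) via interior-point methods, to obtain a polynomial-time algorithm for~(\ref{lpcar}). There is no real obstacle here; the only point to be careful about is treating~$\ell$ as a fixed constant of the model (it is the discretization parameter introduced in~(\ref{asi}) rather than part of the instance input), so that the $O((m+1)\ell n)$ size bound is genuinely polynomial in~$m$ and~$n$.
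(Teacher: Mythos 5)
Your proposal is correct and follows essentially the same route as the paper: the paper likewise reduces each of the $m+1$ CVaR constraints to the linear (for $L_1$, $L_\infty$) or second-order cone (for $L_2$) blocks of Corollaries~\ref{coinfty}, \ref{co1} and~\ref{co2}, combines them with the polyhedral description of $\mathbb{X}$, and invokes polynomial-time solvability of LP/SOCP. Your explicit remark that $\ell$ is treated as a fixed model constant (consistent with the paper's phrase ``by fixing sufficiently large constant $\ell$'') is a useful clarification the paper leaves implicit, but it does not change the argument.
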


Consider the following special case of~(\ref{lpcar}) for $m=0$:
\begin{equation}
\label{modunc2}
	\begin{array}{lll}
		\min &  \displaystyle \sup_{{\rm P}\in \mathcal{P}^{\ell}_{\pi_0}}
				 {\rm CVaR}_{\mathrm{P}}^{\epsilon}[\tilde{\pmb{c}}^T\pmb{x}]\\
				 &\pmb{x}\in \mathbb{X}\subseteq \{0,1\}^n.
	\end{array}
\end{equation}
Hence,~(\ref{modunc2}) is a CVaR counterpart  of 
 a combinatorial optimization problem with uncertain costs. 
 If  the $L_{2}$-norm is used to define the deviation function~(\ref{disteq}) and $\epsilon\rightarrow 1$, then~(\ref{modunc2}) becomes the following \emph{robust combinatorial optimization problem}:
\begin{equation}
\label{modunc3}
	\min_{\pmb{x}\in \mathbb{X}}  \max_{\pmb{c}\in \mathcal{U}}  \pmb{c}^T\pmb{x},
\end{equation}
where $\mathcal{U}=\{\pmb{c}: ||\pmb{B}(\pmb{c}-\hat{\pmb{c}})||_{2}\leq \overline{\delta}\}$ is an ellipsoidal uncertainty set. Problem~(\ref{modunc3}) is known to be NP-hard even for some very special cases of $\mathbb{X}$~\cite{BS04r}.
 As a consequence, we get the following:
 \begin{cor}
 If $\mathbb{X}\subseteq \{0,1\}^n$, then~(\ref{modunc2}) is NP-hard for  the~$L_{2}$-norm used in~(\ref{disteq}).
  \end{cor}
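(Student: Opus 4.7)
The plan is a polynomial-time reduction from the robust combinatorial optimization problem~(\ref{modunc3}) with ellipsoidal uncertainty, cited in the excerpt as NP-hard for $\mathbb{X}\subseteq\{0,1\}^n$, to problem~(\ref{modunc2}). Given any instance of~(\ref{modunc3}) with $\mathcal{U}=\{\pmb{c}:\|\pmb{B}(\pmb{c}-\hat{\pmb{c}})\|_2\leq\overline{\delta}\}$, I would construct an instance of~(\ref{modunc2}) by fixing $\ell=1$ (so $\Lambda=\{0,1\}$ and $\lambda_1=1$), choosing any $\epsilon\in[0,1)$, and building the joint possibility distribution $\pi_0$ as in~(\ref{jpos}) with marginal box widths $\underline{c}_j,\overline{c}_j$ taken large enough that $\mathcal{I}\supseteq\mathcal{D}$. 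Then $\mathcal{C}(0)=\mathcal{I}\cap\mathcal{D}$ coincides with the prescribed ellipsoid~$\mathcal{U}$, and Assumption~\ref{assump1} is satisfied.

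The crux is to show that under this choice the CVaR-based objective collapses to the strict robust objective, namely
\[
\sup_{\mathrm{P}\in\mathcal{P}^{1}_{\pi_0}}\mathrm{CVaR}_{\mathrm{P}}^{\epsilon}[\tilde{\pmb{c}}^T\pmb{x}]=\max_{\pmb{c}\in\mathcal{U}}\pmb{c}^T\pmb{x}
\]
for every $\pmb{x}\in\mathbb{X}$. Because $\lambda_1=1$, the constraint $\mathrm{P}(\mathcal{C}(\lambda_1))\geq 1-\lambda_1=0$ in~(\ref{asi}) is vacuous, so $\mathcal{P}^{1}_{\pi_0}$ equals the set of all probability measures supported on $\mathcal{U}$. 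The upper bound $\mathrm{CVaR}_{\mathrm{P}}^{\epsilon}[\tilde{\pmb{c}}^T\pmb{x}]\leq\max_{\pmb{c}\in\mathcal{U}}\pmb{c}^T\pmb{x}$ holds for every such $\mathrm{P}$, since CVaR is dominated by the essential supremum of the underlying random variable, which is itself bounded by the maximum over the support. Conversely, taking $\mathrm{P}$ to be a Dirac mass $\delta_{\pmb{c}^*}$ at a worst-case scenario $\pmb{c}^*\in\mathcal{U}$ maximizing $\pmb{c}^T\pmb{x}$, and setting $t=(\pmb{c}^*)^T\pmb{x}$ in~(\ref{defcvar}), yields the value $(\pmb{c}^*)^T\pmb{x}$, so the supremum is attained.

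Combining the two bounds,~(\ref{modunc2}) with the constructed data is equivalent to~(\ref{modunc3}) on the same $\mathcal{U}$ and $\mathbb{X}$, and NP-hardness transfers in polynomial time. The main point requiring care is that the Dirac witness must actually lie in $\mathcal{P}^{1}_{\pi_0}$; this is where the choice $\ell=1$ is essential, because for $\ell\geq 2$ the intermediate constraints $\mathrm{P}(\mathcal{C}(\lambda_i))\geq 1-\lambda_i$ with $\lambda_i<1$ would exclude point masses concentrated near the boundary of the support. The simplicity of $\ell=1$ makes the reduction work independently of the value of $\epsilon$, so the delicate limit $\epsilon\to 1$ invoked informally before the statement is not actually needed.
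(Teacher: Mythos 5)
Your reduction is correct, but it reaches the strict robust problem~(\ref{modunc3}) by a different mechanism than the paper. The paper's own argument is the informal limit $\epsilon\to 1$: it observes that as $\epsilon\to 1$ the objective of~(\ref{modunc2}) tends to $\max_{\pmb{c}\in\mathcal{U}}\pmb{c}^T\pmb{x}$ over the ellipsoid $\mathcal{U}=\{\pmb{c}:\|\pmb{B}(\pmb{c}-\hat{\pmb{c}})\|_2\leq\overline{\delta}\}$ and then cites the NP-hardness of~(\ref{modunc3}) from the Bertsimas--Sim result. You instead fix $\ell=1$, note that the only binding condition in~(\ref{asi}) is $\mathrm{P}(\mathcal{C}(0))=1$, so $\mathcal{P}^{1}_{\pi_0}=\mathcal{P}(\mathcal{U})$, and sandwich $\sup_{\mathrm{P}}\mathrm{CVaR}^{\epsilon}_{\mathrm{P}}[\tilde{\pmb{c}}^T\pmb{x}]$ between the Dirac lower bound and the essential-supremum upper bound to obtain exact equality with $\max_{\pmb{c}\in\mathcal{U}}\pmb{c}^T\pmb{x}$ for every fixed $\epsilon\in[0,1)$. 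Your route buys something the paper's does not: a bona fide polynomial-time reduction for each admissible $\epsilon$, whereas $\epsilon=1$ lies outside the domain in~(\ref{defcvar}), so the paper's limit argument establishes hardness only of a boundary case; your closing remark correctly identifies why the paper needs that limit (for $\ell\geq 2$ the constraints $\mathrm{P}(\mathcal{C}(\lambda_i))\geq 1-\lambda_i$ exclude Dirac masses at the boundary of the support). Two minor points to make the reduction airtight: you should note that $\pmb{B}$ has full column rank in the hard instances, so that $\mathcal{D}$ is bounded, a box $\mathcal{I}\supseteq\mathcal{D}$ of polynomial bit-size exists, and Assumption~\ref{assump1} holds; and you should state explicitly that hardness of the subfamily of instances with $\ell=1$ suffices for hardness of the general problem~(\ref{modunc2}), since $\ell$ is part of the instance description.
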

 
 Exploring the complexity of~(\ref{modunc2}) with other norms used in the deviation function~(\ref{disteq})
 is an interesting research direction.

\section{Computational examples}
\label{sec5}

In this section, we will show two applications of the models constructed in Section~\ref{stfc}. We first examine a 
continuous knapsack problem in which
the item profits $c_j$, $j\in [n]$, are deterministic and the item weights
$\tilde{a}_j$, $j\in [n]$,  are uncertain. 
Thus, we wish to choose fractions of items in the set $[n]=\{1,\dots,n\}$
 to maximize the total profit, subject to the condition that the total weight (being an uncertain variable)
 is at most
 a given limit~$b$.
 Suppose that we only know the nominal item weights~$\hat{a}_j$ and estimate that the weight 
 realizations $a_j\in [(1-q)\hat{a}_j, (1+q)\hat{a}_j]$ for some prescribed value $q\in [0,1]$. We use  fuzzy intervals 
 $\tilde{a}_j=\braket{\hat{a}_j, (1-q)\hat{a}_j, (1+q)\hat{a}_j}_{0.5-0.5}$ to define the possibility distributions $\pi_{\tilde{a}_j}$ 
 for each $j\in [n]$. The deviation function takes the form of~(\ref{disteq1}) (the $L_1$-norm is used). 
 We set $\overline{\delta}= \Delta\sum_{j\in [n]} \hat{a}_j$ for some fixed value of $\Delta\in [0,1]$ which represents the amount of uncertainty in the item weight realizations. The membership function of the fuzzy interval $\tilde{\delta}=\braket{0,\overline{\delta}}_{1}$ represents the possibility distribution of the uncertain deviation. 
   
   The sample data for the problem are generated as follows. We chose $\hat{a}_i=U([1,100])$, where $U([a,b])$ is a uniformly distributed random number in the interval $[a,b]$. We fix $q=0.4$ and $b=0.3\sum_{j\in [n]} \hat{a}_j$.
   In our example, $g$ is the identity function, and $\ell=100$.
  This gives  the following version of~(\ref{lpcar}): 
 \begin{equation}
\label{unckn}
	\begin{array}{lll}
			\max & \displaystyle \sum_{j\in [n]} c_jx_j \\
				  & \displaystyle \sup_{{\rm P}\in \mathcal{P}^{\ell}_{\pi}}
				 {\rm CVaR}_{\mathrm{P}}^{\epsilon}[\tilde{\pmb{a}}^T\pmb{x}] \leq b\\
				  & 0\leq x_j\leq 1 & \forall j\in [n].
	\end{array}
\end{equation} 

In order to solve~(\ref{unckn}) we used the reformulation~(\ref{repruccvar11})-(\ref{repruccvar81})  (see Corollary~\ref{co1}) of the 
 CVaR constraint, which leads to a linear programming problem.

The results for a sample instance with $n=50$ are shown in Figure~\ref{figex1}. The model has been solved for $\Delta\in \{0, 0.1, 0.2, 0.3\}$ and $\epsilon\in\{0,0.1,\dots,0.9\}$. For $\Delta=0$, there is no uncertainty and the CVaR constraint in~(\ref{unckn}) becomes $\hat{\pmb{a}}^T\pmb{x}\leq b$. Obviously, in this case, the optimal objective value is the same for all $\epsilon$ which is represented in Figure~\ref{figex1} as the horizontal line with $opt=1417$. Increasing $\Delta$, we increase the amount of uncertainty, and, in consequence, the optimal objective value decreases. This can be observed in the subsequent 
%lines
curves
 in Figure~\ref{figex1}, below the horizontal line.  One can also observe a deterioration in the objective function values as $\epsilon$ increases. The decision-maker can now choose a suitable combination of $\Delta$ and $\epsilon$ to take into account her/his attitude toward risk.
 \begin{figure}[ht!]
\centering
\includegraphics[width=10cm]{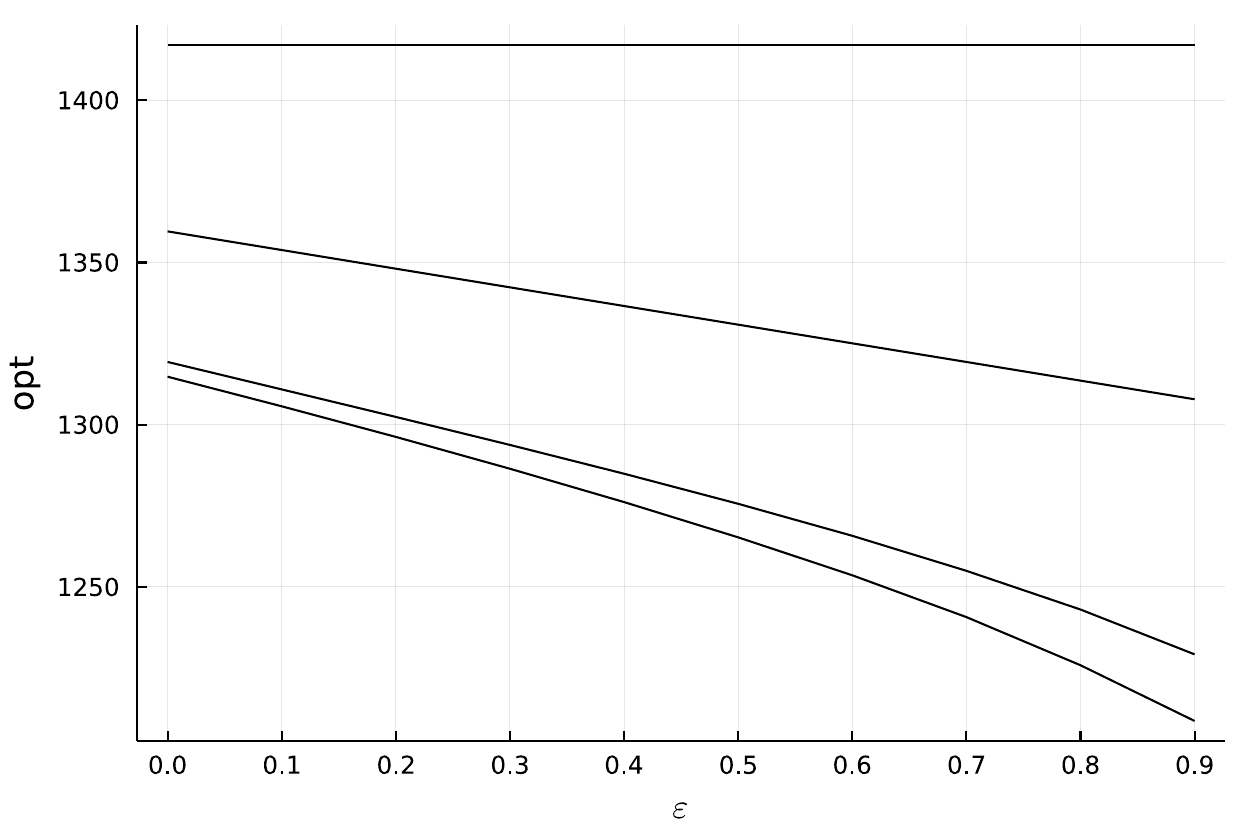}
\caption{The optimal objective function values for various $\epsilon$ and $\Delta=0,0.1,0.2, 0.3$. The horizontal line represents the case with $\Delta=0$, and the next 
%lines
curves represent $\Delta=0.1,0.2,0.3$,  respectively. } \label{figex1}
\end{figure}

In the second example, we consider the following portfolio selection problem. We are given $n$ assets. Let $\tilde{\pmb{r}}=(\tilde{r}_1,\dots,\tilde{r}_n)$ be a random vector of the returns of these $n$ assets. Typically, the probability distribution of $\tilde{\pmb{r}}$ is unknown. However, one can estimate the mean $\hat{\pmb{r}}$ and the covariance matrix $\pmb{\Sigma}$ of $\tilde{\pmb{r}}$, for example, using its past realizations. We can now construct a joint possibility distribution for $\tilde{\pmb{r}}$ as follows. For the component $\tilde{r}_j$, we estimate its support $[\hat{r}_j-6\sqrt{\Sigma_{jj}}, \hat{r}_j+6\sqrt{\Sigma_{jj}}]$, where $\Sigma_{jj}=\sigma^2_j$ is the estimated variance of $\tilde{r}_j$. By Chebyshev's inequality, the value of $\tilde{r}_j$ belongs to the support with the high probability. Now, the possibility distribution $\pi_{\tilde{r}_j}$ is a membership function of the fuzzy interval 
$\tilde{r}_j=\braket{\hat{r}_j, \hat{r}_j-6\sqrt{\Sigma_{jj}}, \hat{r}_j+6\sqrt{\Sigma_{jj}}}_{1-1}$ (we use triangular fuzzy intervals; however, one can use other shapes to reflect the uncertainty better). 

The distance function  takes the form:
$\delta(\pmb{r})=||\pmb{\Sigma}^{-\frac{1}{2}}(\pmb{r}-\hat{\pmb{r}})||_{2}$.
Now, the membership function of the fuzzy interval $\tilde{\delta}=\braket{0,\overline{\delta}}_{1}$, for some prescribed~$\overline{\delta}$, represents the possibility distribution $\pi_{\tilde{\delta}}$ for the deviation. 
Let $\mathbb{X}=\{\pmb{x}\in \mathbb{R}^n_{+}: x_1+\dots+x_n=1\}$ be the set of all portfolios. Now, $-\tilde{\pmb{r}}^T\pmb{x}$ is a random variable that denotes the loss of portfolio $\pmb{x}$. The decision-maker can express his loss in terms of some disutility function $g(-\tilde{\pmb{r}}^T\pmb{x})$. A non-linear disutility allows us to express the increasing marginal loss. In the example, we use $g(y)=e^y$. We consider the following version of~(\ref{lpcar}):
\begin{equation}
\label{portfolio}
\begin{array}{ll}
		\min & h \\
		                 &\displaystyle \sup_{{\rm P}\in \mathcal{P}^{\ell}_{\pi}}
				 {\rm CVaR}_{\mathrm{P}}^{\epsilon}[g(-\tilde{\pmb{r}}^T\pmb{x})]\leq h\\
				 &\pmb{x}\in \mathbb{X}, h\in \Rset.\\ 
\end{array}
\end{equation}
In order to solve~(\ref{portfolio}) 
we used the reformulation based on 
the piecewise affine approximation of~$g$, 
presented in Section~\ref{arbitrg}. We approximated $g(y)$ using $\xi=10$ affine pieces. By Corollary~\ref{co2} and~(\ref{repruccvar32g}), model~(\ref{portfolio}) can be converted to a  second-order cone optimization problem.

We solved~(\ref{portfolio}) for a sample $n=6$ assets from the Polish stock market. The assets have been chosen from the banking sector. Using 30 past realizations, we estimated the mean and covariance matrix for $\tilde{\pmb{r}}$:
\begin{eqnarray*}
	\hat{\pmb{r}}&=&[0.01, 0.308, -0.076, 0.239, 0.434, 0.377]\\
%{\footnotesize
\pmb{\Sigma}&=&
\arraycolsep=1.9pt
\left[ \begin{array}{rrrrrrrr}
					6.596 & 2.509 & 4.168 & 2.565 & 2.801 & 1.169 \\
					 &  3.171 & 2.961 & 0.597 & 1.067 & 0.828 \\
					 &	        & 8.765 & 2.676 & 2.977 & 0.976 \\
					 &              &	     & 2.907 & 2.299 & 0.975 \\
					 &		& &			  & 3.150 & 1.05 \\
					 &		&		&        &		& 1.545 
			  \end{array}\right]   
\end{eqnarray*}
We choose $\overline{\delta}\in \{0, 2, 4, 6, 8\}$ and $\epsilon\in \{0, 0.1,
\dots,0.9\}$. The results are shown in Figure~\ref{figex2}. For $\overline{\delta}=0$, there is no uncertainty, and in the optimal portfolio $x_5=1$, the $5$th asset has the smallest loss (the largest return). This is represented by the horizontal line in Figure~\ref{figex2}. We will call $\hat{\pmb{x}}=(0,0,0,0,1,0)$ the nominal portfolio.

 \begin{figure}[ht!]
\centering
\includegraphics[width=10cm]{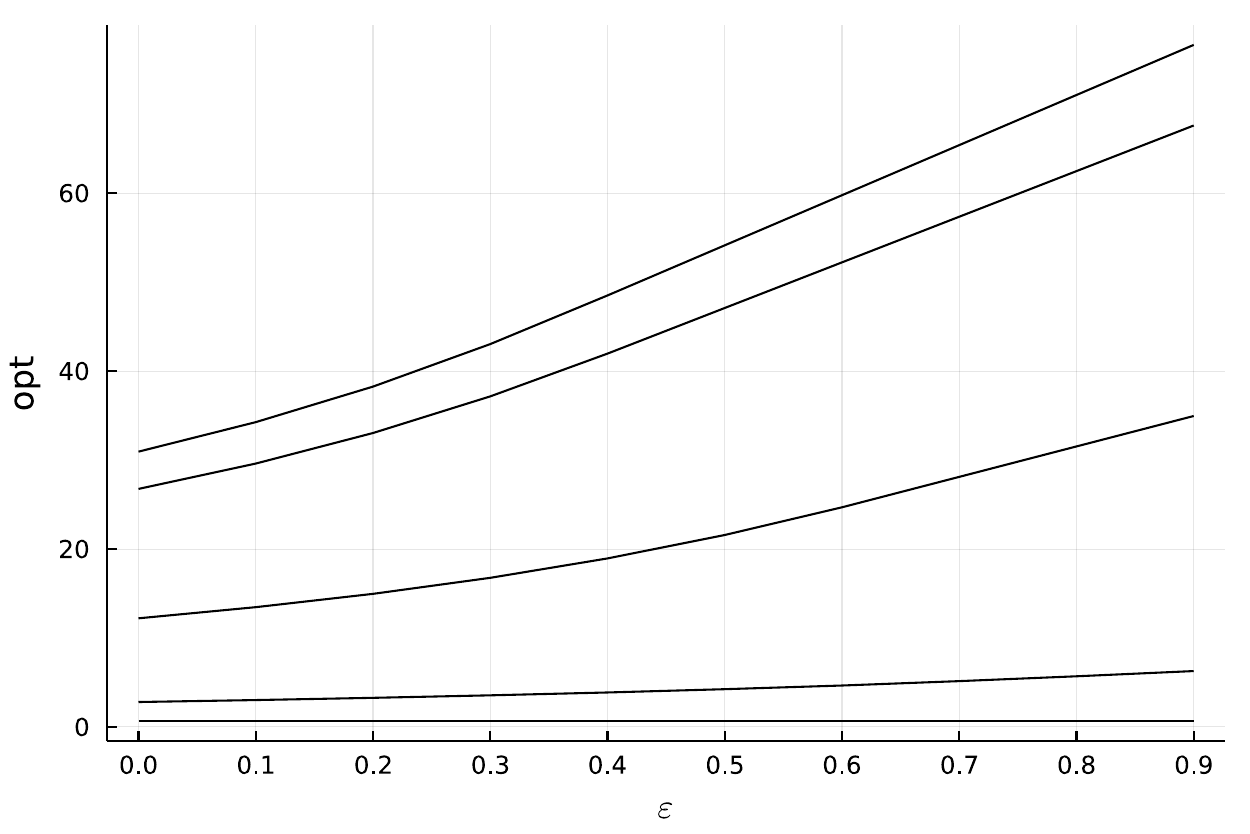}
\caption{The optimal objective function values for various $\epsilon$ and $\overline{\delta}=0, 2, 4, 6, 8$. The horizontal line represents the case with $\overline{\delta}=0$ and the next 
%lines
curves
 represent $\overline{\delta}=2, 4, 6, 8$, respectively. } \label{figex2}
\end{figure}

Figure~\ref{figex3} shows a comparison between the value of the objective function of the nominal portfolio $\hat{\pmb{x}}$, denoted by $val$ and the optimal objective value of~(\ref{portfolio}), denoted by $opt$. For a fixed~$\epsilon$, the gap equal to $(val-opt)/opt$ is calculated. One can see that the gap is positive for each $\epsilon$, so taking uncertainty into account allows decision-makers to reduce the risk significantly. The largest gap equal to~1.31 has been reported for $\epsilon=0.4$. In this case, the optimal portfolio is $\pmb{x}=(0,0.2,0,0.16,0.03, 0.61)$. Observe that in this portfolio, diversification is profitable and the largest amount is allocated to asset~$6$, which has a positive return and the smallest variance.

 \begin{figure}[ht!]
\centering
\includegraphics[width=10cm]{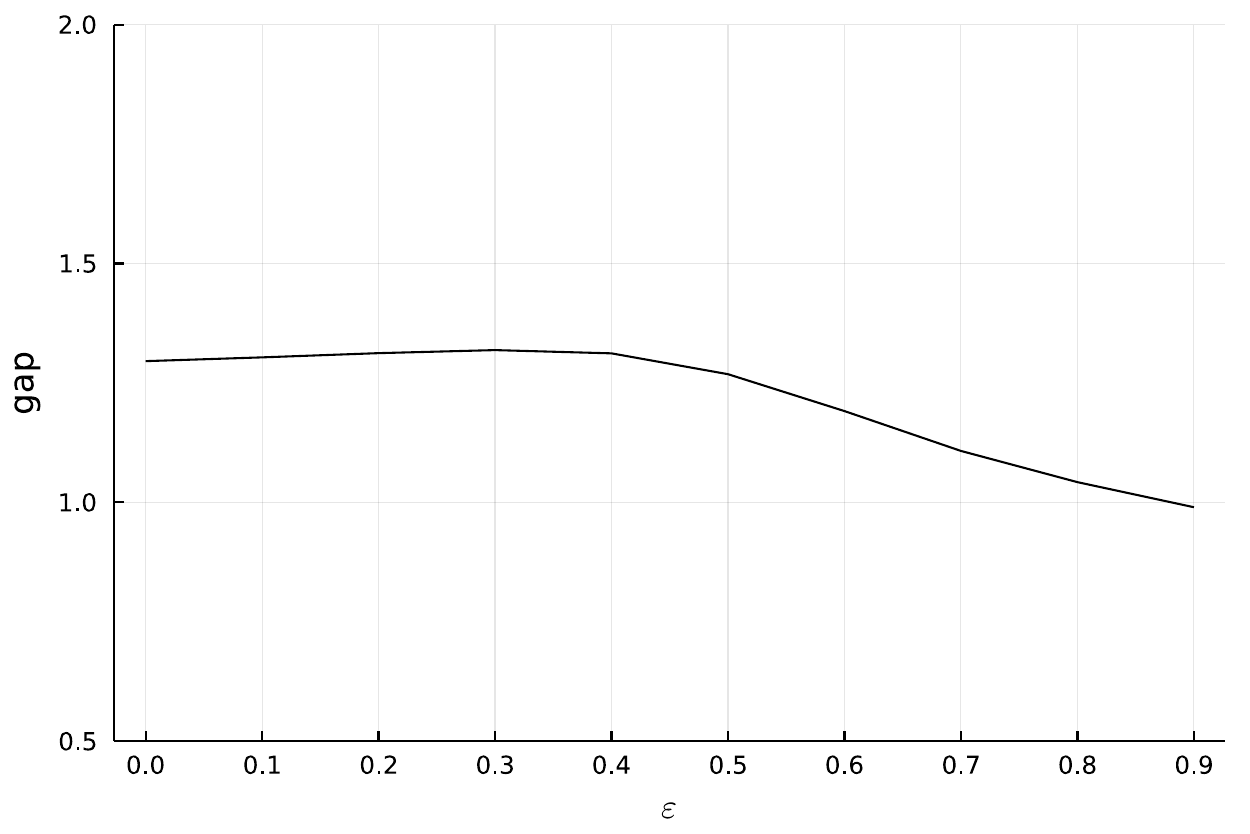}
\caption{The gaps $(val-opt)/opt$ for $\overline{\delta}=4$, where $val$ is the objective function value for the nominal portfolio $\hat{\pmb{x}}$ and $opt$ is the optimal objective value.} \label{figex3}
\end{figure}

\section{Conclusions}

In this paper,  we have proposed a framework for handling uncertain constraints in optimization problems. We have applied fuzzy intervals to model uncertain constraint coefficients. Under the possibilistic interpretation, the membership functions of the fuzzy intervals induce a joint possibility distribution in the set of constraint coefficients realizations (scenario set). Furthermore, this joint possibility distribution characterizes an ambiguity set of probability distributions in the scenario set. We have applied the distributionally robust approach to convert imprecise constraints into deterministic equivalents. For this purpose, we have used the CVaR risk measure, which allows us to take decision-makers' risk aversion into account. We have shown how the concept can be applied to various optimization problems with uncertain constraints and objective functions. Most of the resulting deterministic optimization problems are polynomially solvable, which makes the approach applicable in practice. An interesting research direction is to characterize the problem complexity for the class of combinatorial problems.

\subsubsection*{Acknowledgements}
Romain Guillaume has benefitted from the AI Interdisciplinary
Institute ANITI funding. ANITI is funded by the French Investing for the Future – PIA3 program under
the Grant agreement ANR-19-PI3A-0004.
Adam Kasperski and Pawe{\l} Zieli{\'n}ski were supported by
 the National Science Centre, Poland, grant 2022/45/B/HS4/00355.

 %\bibliographystyle{abbrv}
%\bibliography{robust}

\end{document}